\documentclass{article}
\usepackage[T1]{fontenc}
\usepackage{files/arxiv}
\usepackage{amsmath, amsfonts, amsthm, amssymb}
\usepackage[colorlinks=true, allcolors=blue]{hyperref} 
\usepackage{graphicx}
\usepackage{algorithm, algorithmic}
\usepackage{subfigure}
\usepackage{xcolor}
\usepackage{setspace}
\input{files/definitions.tex}
\definecolor{gri}{RGB}{55,55,55}
\newcommand{\gray}[1]{\textcolor{gri}{{#1}}}
\newtheorem{remark}{Remark}
\newtheorem{theorem}{Theorem}
\newtheorem{lemma}{Lemma}
\newtheorem{corollary}{Corollary}
\usepackage{tikz,xcolor,hyperref}

\definecolor{lime}{HTML}{A6CE39}
\DeclareRobustCommand{\orcidicon}{
	\begin{tikzpicture}
	\draw[lime, fill=lime] (0,0) 
	circle [radius=0.16] 
	node[white] {{\fontfamily{qag}\selectfont \tiny ID}};
	\draw[white, fill=white] (-0.0625,0.095) 
	circle [radius=0.007];
	\end{tikzpicture}
	\hspace{-2mm}
}
\foreach \x in {A, ..., Z}{\expandafter\xdef\csname orcid\x\endcsname{\noexpand\href{https://orcid.org/\csname orcidauthor\x\endcsname}
			{\noexpand\orcidicon}}
}
\title{M-IHS: An Accelerated Randomized Preconditioning Method Avoiding Costly Matrix Decompositions}


\author{ \orcidA{}\hspace{1mm}Ibrahim Kurban Ozaslan\thanks{This author's studies have been supported by the Scientific and Technological
Research Council of Turkey (T\"UB\.ITAK) B\.IDEB 2210-A Scholarship Program.} \\
	Department of Electrical and Electronics Engineering\\
	Bilkent University\\
	Ankara, Turkey 06800 \\
	\texttt{ozaslan@usc.edu} \\
	\And
	\orcidB{}\hspace{1mm}Mert Pilanci\thanks{This author's work was partially supported by the National Science Foundation under grant IIS-1838179.} \\
	Department of Electrical Engineering\\
	Stanford University\\
	CA, USA \\
	\texttt{pilanci@stanford.edu} \\
	\And
    \orcidC{}\hspace{1mm}Orhan Arikan\\
	Department of Electrical and Electronics Engineering\\
	Bilkent University\\
	Ankara, Turkey 06800 \\
	\texttt{oarikan@ee.bilkent.edu.tr} \\
}



\hypersetup{
pdftitle={Momentum Iterative Hessian Sketch},
pdfsubject={q-bio.NC, q-bio.QM},
pdfauthor={İbrahim. K. Ozaslan,Mert Pilanci, Orhan Arikan},
pdfkeywords={Tikhonov regularization, ridge regression, random projection, randomized preconditioning, acceleration},
}

\begin{document}
\maketitle

\begin{abstract}
Momentum Iterative Hessian Sketch (\ff{M-IHS}) techniques, a group of solvers for large scale regularized linear Least Squares (LS) problems, are proposed and analyzed in detail. Proposed \ff{M-IHS} techniques are obtained by incorporating the Heavy Ball Acceleration into the Iterative Hessian Sketch algorithm and they provide significant improvements over the randomized preconditioning techniques. By using approximate solvers along with the iterations, the proposed techniques are capable of avoiding all matrix decompositions and inversions, which is one of the main advantages over the alternative solvers such as the Blendenpik and the LSRN. Similar to the Chebyshev semi-iterations, the \ff{M-IHS} variants do not use any inner products and eliminate the corresponding synchronization steps in hierarchical or distributed memory systems, yet the \ff{M-IHS} converges faster than the Chebyshev Semi-iteration based solvers. Lower bounds on the required sketch size for various randomized distributions are established through the error analyses. Unlike the previously proposed approaches to produce a solution approximation, the proposed \ff{M-IHS} techniques can use sketch sizes that are proportional to the statistical dimension which is always smaller than the rank of the coefficient matrix. The relative computational saving gets more significant as the regularization parameter or the singular value decay rate of the coefficient matrix increase.  
\end{abstract}

\keywords{Tikhonov regularization \and Ridge regression \and Random projection \and Randomized preconditioning \and Acceleration}

\section{Introduction}\label{sec:intro}
We are presenting a group of solvers, named as Momentum Iterative Hessian Sketch, \ff{M-IHS}, that is designed for solving large scale linear system of equations in the form of
\begin{equation}
    Ax_{0} + \omega = b, \label{eq:setup}
\end{equation}
where $A\in\realss{n}{d}$ is the given data or coefficient matrix, $b$ is the given measurement vector contaminated by the noise or computation/discretization error $\omega$, and $x_{0}$ is the vector desired to be recovered. Due to contaminated measurements, solutions can differ according to the constraints imposed on the problem. In this article, we are particularly interested in the $\ell2$-norm regularized Least Squares (LS) solution:
\begin{align}
    x^{*}= &\swrite{ argmin }{x} \underbrace{\frac{1}{2}\lscost{Ax - b}{2}^2 + \frac{\lambda}{2}\lscost{x}{2}^2}_{f(x)}, \label{eq:intro}
\end{align}
which is known as the Tikhonov Regularization or the Ridge Regression \cite{ref:bjorck}. The problem in \eqref{eq:intro} frequently arises in various large scale applications of science and engineering. For example, such regularized problems appear in the discretization of Fredholm Integral Equations of the first kind \cite{ref:hansen_book_2}. In those cases, the data matrix might be ill conditioned and the linear system can be either over-determined or square. When the system is under-determined, although sparse solutions are recently popularized by Compressed Sensing \cite{ref:compressed_sensing}, the least norm solutions occupy a fundamental place in statistics applications such as the Support Vector Machines \cite{ref:dual_rp, ref:dual_drineas}. Solutions to the problem in both regimes, i.e, $n\geq d$ and $n<d$, are often required as intermediate steps of rather complicated algorithms such as the Interior Point and the ADMM that are widely used in machine learning and image processing applications \cite{ref:admm, ref:bubeck, ref:newton_sketch}.

Throughout the manuscript, it is assumed that a proper estimate for the regularization parameter $\lambda$ is available. In the absence of such an estimate, risk estimators such as the Discrepancy Principle, Unbiased Prediction Risk Estimate, Stein's Unbiased Risk Estimate and Generalized Cross Validation can be directly used to obtain an estimate for the regularization parameter in the moderate size problems \cite{ref:vogel}. In large scale problems, these risk estimators can be adapted for the lower dimensional sub-problems that arise during the iteration of the first order iterative solvers \cite{ref:siam_review_hybrid}. A hybrid scheme that adaptively selects the regularization parameter along with the iterations is also suitable for the proposed \ff{M-IHS} solvers. Indeed, we have developed such a technique that can be found in Chapter 4 of \cite{ref:iko_ms}, but we will present it in a separate manuscript due to page length constraints.

The regularized solution in \eqref{eq:intro} can be obtained by using \textit{direct} methods such as the Cholesky decomposition for square $A$, or the QR decomposition for rectangular $A$. However, $O(nd\min(n,d))$ computational complexity of any full matrix decomposition becomes prohibitively large as the dimensions increase. For large scale problems, linear dependence on both dimensions might seem acceptable and can be realizable by using the first order iterative solvers that are based on the Krylov Subspaces \cite{ref:bjorck}. These methods require only a few matrix-vector and vector-vector multiplications for each iteration, but the number of iterations that is needed to reach high level of accuracies is highly sensitive to the condition number of the coefficient matrix. For the problem given in \eqref{eq:intro}, the convergence rate of the first order iterative solvers based on the Krylov subspace iterations including Conjugate Gradient (CGLS), LSQR, LSMR, Chebyshev Semi-iterative (CS) technique and many others is characterized by the following inequality:
\begin{equation*}
    \lscost{x^i - x^*}{2} \leq \left(\frac{\sqrt{\kappa(A^TA+\lambda I_d)} - 1}{\sqrt{\kappa(A^TA+\lambda I_d)} +1}\right)^i \lscost{x^1 - x^*}{2}, \ 1 < i,
\end{equation*}
where $x^*$ is the optimal solution of \eqref{eq:intro}, $x^1$ is the initial guess, $x^i$ is the $i$-th iterate of the solver and the condition number $\kappa(\cdot)$ is defined as the ratio of the largest singular value to the smallest singular value of its argument \cite{ref:krylov_convergence}. Since for ill conditioned matrices $\kappa(A^TA+\lambda I_d)$ can be large, the rate of convergence may be extremely slow.

The computational complexity of the Krylov Subspace-based iterative solvers is $O(nd)$ for each iteration, which is significantly less than $O(nd\min(n,d))$ if the number of iterations can be significantly fewer than $\min(n,d)$. However, in applications such as big data where $A$ is very large dimensional, the computational complexity is not the only metric for feasibility of the algorithms. For example, if the coefficient matrix is too large to fit in a single working memory and it could be merely stored in a number of distributed computational nodes, then at least two distributed computations of matrix-vector multiplications are required at each iteration of algorithms such as the CGLS or the LSQR \cite{ref:distributed_pilanci, ref:distributed_1}. Therefore the number of iterations should also be counted as an important metric to measure the overall complexity of an algorithm. One way to reduce the number of iterations in the iterative solvers is to use preconditioning to transform an ill conditioned problem to a well conditioned one \cite{ref:templates}. In the deterministic settings, finding a low-cost and effective preconditioning matrix is still a challenging task unless the coefficent matrix has a particular structure \cite{ref:precond}. 

In addition to the number of iterations, the number of inner products in each iteration also plays an important role in the overall complexity. Each inner product calculation constitutes a synchronization step in parallel computing and therefore is undesirable for distributed or hierarchical memory systems \cite{ref:templates}. The CS technique can be preferred in this kind of applications, since it does not use any inner products and therefore eliminates some of the synchronization steps that are required by the techniques such as the CG or GMRES. However, the CS requires prior information about the ellipsoid that contains all the eigenvalues of $A$, which is typically not available in practice \cite{ref:par_cheby}.

These aforementioned drawbacks of the direct and the iterative methods can be remedied by using the Random Projection (RP) techniques \cite{ref:jl_lemma, ref:randnla}. These techniques are capable of both reducing the dimensions and bounding the number of iterations with statistical guarantees, while they are quite convenient for parallel and distributed computations. The development and the applications of the RP based algorithms can be found in \cite{ref:mahoney_book, ref:w14_sketching_asatool} and references therein. There are two main approaches for the applications of the RP based techniques to the regularized LS problem in \eqref{eq:intro}. In the first approach, that is referred to as the \textit{classical sketching}, the coefficient matrix $A$ and the measurement vector $b$ are projected down onto a lower dimensional ($m\ll n$)\footnote{Without loss of generality we can assume the linear system is over-determined, if it is not, then we can take a dual of the problem to obtain an over-determined problem as examined later.} subspace by using a randomly constructed \textit{sketching matrix} $S\in\realss{m}{n}$, to obtain efficiently an $\zeta$-optimal solution with high probability for the \textit{cost approximation} \cite{ref:faster_ls, ref:pilanci1}:
\begin{equation}
    \widetilde{x} = \swrite{argmin}{x}\frac{1}{2}\lscost{SAx - Sb}{2}^2 + \frac{\lambda}{2}\lscost{x}{2}^2, \mbox{ such that }  f(\widetilde{x})\leq (1+\zeta)f(x^*). \label{eq:cost_approx}
\end{equation}
For both the sparse and dense systems, in \cite{ref:acw16_sharper} the best known lower bounds on the sketch size for obtaining an $\zeta$-optimal cost approximation have been derived showing that the sketch size can be chosen proportional to the \textit{statistical dimension} $\sd(A) = \tr{A(A^TA+\lambda I_d)^{-1}A^T}$. Although the cost approximation is sufficient for many machine learning problems, the \textit{solution approximation} which aims to produce solutions that are close to the optimal solution is a more preferable metric for the problems arisen from, for example, discretization of Fredholm integrals \cite{ref:bjorck, ref:vogel}. However, as shown in \cite{ref:ihs}, the classical sketching is sub-optimal in terms of the minimum sketch size for obtaining a solution approximation. 

In the second approach of \textit{randomized preconditioning}, by iteratively solving a number of low dimensional sub-problems constituted by $(SA, \nabla f(x^i))$ pairs, algorithms with reasonable sketch sizes obtain an $\eta$-optimal solution approximation $\widehat{x}$ such that
\begin{equation}
    \lscost{\widehat{x} - x^*}{W} \leq \eta \lscost{x^*}{W}, \label{eq:sol_approx}
\end{equation}
where $W$ is a positive definite weight matrix. In \cite{ref:rokhlin}, RP techniques have been proposed to construct a preconditioning matrix for CG-like algorithms by using the inverse of $R$ factor in the QR decomposition of the \textit{sketched matrix} $SA$. Later, implementation of similar ideas resulted in Blendenpik and LSRN which have been shown to be faster than some of the deterministic solvers of LAPACK \cite{ref:blendenpik, ref:lsrn}. To solve the preconditioned problems, as opposed to the Blendenpik which uses the LSQR, the LSRN uses the CS technique for parallelization purposes and deduce the prior information about the eigenvalues based on the results of the random matrix theory. The main drawback of the LSRN and the Blendenpik is that regardless of the desired accuracy $\eta$, one has to pay the whole cost, $O(md^2)$, of a full $m\times d$ dimensional matrix decomposition, which is the dominant term in the computational complexity of these algorithms. Iterative Hessian Sketch (IHS) proposed in \cite{ref:ihs} enables the use of the sketched Hessian as preconditioning matrix in the Gradient Descent method \cite{ref:acc_ihs} thereby providing a reduction in the dominant complexity term $O(md^2)$ to $O(md)$. For the IHS, instead of computing a full decomposition or an inversion, a linear system can be approximately solved for a pre-determined tolerance. The ability to use this \textit{inexact} approach becomes more important in the large scale inverse problems such as 3D imaging \cite{ref:3d_image} where even the decomposition of $m\times d$-dimensional sketched matrix is infeasible to compute. By using the preconditioning idea of the IHS in the CG technique, Accelerated IHS (A-IHS) has been proposed in \cite{ref:acc_ihs}. Lastly, in \cite{ref:mihs}, it has been showed that if the linear system is strongly over-determined, then the momentum parameters of the Heavy Ball Method can be robustly estimated by using Marchenko Pastur Law (MPL) \cite{ref:edelman}. This analysis results in a prototype solver \ff{M-IHS} that we study here in detail.

The statistical lower bounds obtained in the current literature suggest that the sketch size in randomized preconditioning algorithms can be chosen proportional to the rank of the problem, which can be significantly larger than the statistical dimension. Although, some lower bounds on the sketch size that are proportional to the statistical dimension have been obtained in Kernel Ridge Regression \cite{ref:avron_kernel}, it is not available for the regularized LS problem for obtaining a solution approximation given in \eqref{eq:sol_approx}.
\section{Contributions}\label{sec:contr}
In this article, we propose a group of random projection based iterative solvers for large scale regularized LS problems. As shown by detailed analyses of their convergence behaviours, the proposed \ff{M-IHS} variants can be used for any dimension regimes with significant computational savings if the statistical dimension of the problem is sufficiently smaller than at least one size of the coefficient matrix. Our guarantees, presented in Theorem \ref{lemma:mihs_conv} and Corollary \ref{corollary:iter}, are based on the solution approximation metric given in \eqref{eq:sol_approx} as opposed to the results obtained for cost approximation metric given in \eqref{eq:cost_approx}. In Lemma \ref{corollary:sketch}, we improved the known lower bounds on the sketch size of various randomized distribution for obtaining a pre-determined convergence rate with a constant probability. These guarantees can be readily extended to any other sketching types by using the \textit{Approximate Matrix Multiplication} (AMM) property defined in \cite{ref:cnw15_optimal_stable_rank}. When tighter bounds for the AMM property will be available in the future, the bounds derived in this work can be automatically improved as well. Although our bounds for the dense sketch matrices such as Subgaussian or Randomized Orthonormal Systems (ROS) are the same as in \cite{ref:acw16_sharper}, we gained slightly better results for the sparse sketching matrices. Additionally, we provide some approximate bounds for the sketch size and the rate of convergence in Corollary \ref{corollary:empirical} which is remarkably tight as demonstrated through numerical experiments. Lastly, in Algorithm \ref{algo:aab_solver}, we extend the idea of LSQR into the linear problems in the form of $A^TAx=b$ which we need to solve during the iterations of all proposed \textit{Inexact} \ff{M-IHS} variants and of the Newton Sketch \cite{ref:newton_sketch}. Similar to the stability advantage of the LSQR over the CGLS technique \cite{ref:lsqr}, the proposed method solves the system in the above form without squaring the condition number as opposed to the techniques such as the symmetric CG and the symmetric Lanczos techniques. In the following link, implementations of the proposed solvers in \textit{MATLAB} together with the codes that generate the figures in the article can be found: \url{https://github.com/ibrahimkurban/M-IHS}.
\section{The proposed M-IHS solvers for the regularized LS problems}\label{sec:prop}
The naive IHS algorithm approximates the Hessian in the Newton method to gain computational savings while solving the Newton sub-systems, and it iteratively minimizes the quadratic objective function given in \eqref{eq:intro} by performing the following updates:
\begin{align}
    x^{i+1} &= \swrite{argmin }{x\in\reals{d}} \lscost{S_iA(x - x^i)}{2}^2 + \lambda\lscost{x-x^i}{2}^2 + 2\langle \nabla f(x^i),\ x \rangle.\label{eq:ihs}
\end{align}
Here, we propose two important innovations over the naive IHS technique, which provide further computational efficiency and improved convergence rate. We realized these two goals by incorporating the Heavy Ball Acceleration \cite{ref:polyak} into the iterations in \eqref{eq:ihs} and propose the following Momentum-IHS (\ff{M-IHS}) updates:
\begin{align}
    \Delta x^i &= \swrite{argmin }{x\in\reals{d}} \lscost{SAx}{2}^2 + \lambda\lscost{x}{2}^2 + 2\left\langle \nabla f(x^i), \ x\right\rangle, \label{eq:mihs1}\\
    x^{i+1}&= x^i + \alpha_i\Delta x^i + \beta_i \left(x^i - x^{i-1}\right), \nonumber
\end{align}
where the same sketching matrix $S$ is used for all iterations. For a properly chosen momentum parameters $\alpha_i$ and $\beta_i$, there is no need to change the sketching matrix in the above iterations unlike the IHS technique. Moreover, the optimal fixed momentum parameters $\alpha$ and $\beta$, that maximize the convergence rate, can be estimated by using the random matrix theory as completely independent of the spectral properties of the coefficient matrix $A$ \cite{ref:mihs}. Here, the linear system is assumed to be strongly over-determined, i.e., $n\gg d$. By using the dual formulation, the theory can be straightforwardly extended to the strongly under-determined case of $d\gg n$ as well \cite{ref:boyd_cvx}. A dual of the problem in \eqref{eq:intro} is
\begin{equation}
    \nu^* = \swrite{argmin }{\nu\in\reals{n}} \underbrace{\frac{1}{2}\lscost{A^T\nu}{2}^2 + \frac{\lambda}{2}\lscost{\nu}{2}^2 - \langle b,\ \nu\rangle}_{g(\nu)}, \label{eq:intro_dual}
\end{equation}
and the relation between the solutions of the primal and dual problem is
\begin{equation}
    \nu^* = (b - Ax^*)/\lambda \Longleftrightarrow x^* = A^T\nu^*. \label{eq:dual_relation}
\end{equation}
The corresponding \ff{M-IHS} iterations for the dual problem are:
\begin{align}
    \Delta \nu^i &= \swrite{argmin }{\nu\in\reals{n}}\lscost{SA^T\nu}{2}^2 +\lambda\lscost{\nu}{2}^2 + 2\left\langle \nabla g(\nu^i),\ \nu\right\rangle, \label{eq:dual_mihs1}\\
    \nu^{i+1} &= \nu^{i} + \alpha \Delta\nu^i + \beta\left(\nu^i - \nu^{i-1}\right),\nonumber
\end{align}
and the solution of the primal problem can be obtained through the relation in \eqref{eq:dual_relation}. We refer to this algorithm as the \ff{Dual M-IHS}. The convergence rate of the \ff{M-IHS} and the \ff{Dual M-IHS} solvers together with the optimal fixed momentum parameters are stated in the Theorem \ref{lemma:mihs_conv} below.
\begin{theorem}\label{lemma:mihs_conv}
Let $A$ and $b$ be the given data in \eqref{eq:setup} with singular values $\sigma_i$ in descending order $1\leq i\leq\min(n,d)$, $x^*\in\reals{d}$ and $\nu^*\in\reals{n}$ are as in \eqref{eq:intro} and \eqref{eq:intro_dual}, respectively. Let $U_1 \in\realss{\max(n,d)}{\min(n,d)}$ consists of the first $n$ rows of an orthogonal basis for $[A^T\ \sqrt{\lambda}I_d]^T$ if the problem is  over-determined, and consists of the first $d$ rows of an orthogonal basis for $[A \ \ \sqrt{\lambda}I_n]^T$ if the problem is under-determined. Let the sketching matrix $S\in\realss{m}{\max(n,d)}$ be drawn from a distribution $\D$ such that
\begin{equation}
    \prob{S\sim\D}{\lscost{U_1^TS^TSU_1 - U_1^TU_1}{2} \geq \epsilon} < \delta, \quad \epsilon \in(0,1) \label{condition:1}.
\end{equation}
Then, the {\ff{M-IHS}} applied on \eqref{eq:intro} and the {\ff{Dual M-IHS}} applied on \eqref{eq:intro_dual} with the following momentum parameters
\begin{equation*}
    \beta^* = \left(\frac{\epsilon}{1+\sqrt{1-\epsilon^2}}\right)^2, \quad\quad \alpha^* = \left(1 - \beta^*\right)\sqrt{1 - \epsilon^2},
\end{equation*}
converge to the optimal solutions, $x^*$ and $\nu^*$, respectively, at the following rate with a probability of at least $(1-\delta)$:
\begin{align*}
    \lscost{x^{i+1} - x^{*}}{D^{-1}} \leq \frac{\epsilon}{1 + \sqrt{1 - \epsilon^2}} \lscost{x^i - x^*}{D^{-1}}, \\ 
    \lscost{\nu^{i+1} - \nu^{*}}{D^{-1}} \leq \frac{\epsilon}{1 + \sqrt{1 - \epsilon^2}} \lscost{\nu^i - \nu^*}{D^{-1}},
\end{align*}
where $D^{-1}$ is the diagonal matrix whose diagonal entries are $\sqrt{\sigma_i^2 + \lambda}$. 
\end{theorem}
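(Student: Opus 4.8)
The plan is to reduce the \ff{M-IHS} recursion to a Heavy Ball iteration on the error $e^i = x^i - x^*$ and then analyse it by localizing the spectrum of the preconditioned Hessian through the sketching guarantee \eqref{condition:1}. First I would observe that the inner problem \eqref{eq:mihs1} is an unconstrained quadratic, so setting its gradient to zero gives $\Delta x^i = -(A^TS^TSA + \lambda I_d)^{-1}\nabla f(x^i)$. Writing $H = A^TA + \lambda I_d$ for the true Hessian and $\widetilde{H} = A^TS^TSA + \lambda I_d$ for the sketched one, optimality of $x^*$ gives $\nabla f(x^i) = H(x^i - x^*) = He^i$, so the momentum step becomes the Heavy Ball recursion $e^{i+1} = (1+\beta)e^i - \alpha\,\widetilde{H}^{-1}H\,e^i - \beta e^{i-1}$. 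The entire convergence behaviour is therefore governed by the matrix $M = \widetilde{H}^{-1}H$.

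The key algebraic step is to localize the spectrum of $M$. Using $A = U_1\Sigma V^T$ with $U_1,\Sigma,V$ coming from the orthogonal basis of $[A^T\ \sqrt{\lambda}I_d]^T$ described in the theorem and $\Sigma = \diag(\sqrt{\sigma_i^2+\lambda})$, I would set $P = U_1^TU_1$ and $Q = U_1^TS^TSU_1$ and verify the factorizations $H = V\Sigma^2 V^T$ and $\widetilde{H} = V\Sigma(I_d + Q - P)\Sigma V^T$. These yield $M = V\Sigma^{-1}(I_d + Q - P)^{-1}\Sigma V^T$, i.e. $M$ is similar to the symmetric matrix $N = (I_d + Q - P)^{-1}$. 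Since condition \eqref{condition:1} is precisely $\lscost{Q - P}{2} < \epsilon$, the eigenvalues of $N$ lie in $[\tfrac{1}{1+\epsilon}, \tfrac{1}{1-\epsilon}]$. Passing to the transformed error $w^i = \Sigma V^T e^i = D^{-1}V^T e^i$ turns the recursion into the symmetric Heavy Ball iteration $w^{i+1} = (1+\beta)w^i - \alpha N w^i - \beta w^{i-1}$, while the measured quantity becomes $\lscost{e^i}{D^{-1}} = \|w^i\|_2$ under the stated diagonal weight $D^{-1} = \Sigma$.

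Having symmetrized, I would diagonalize $N$ and decouple the iteration into independent scalar recursions $\xi^{i+1} = (1+\beta-\alpha\mu)\xi^i - \beta\xi^{i-1}$, one per eigenvalue $\mu \in [\ell, L]$ with $\ell = \tfrac{1}{1+\epsilon}$ and $L = \tfrac{1}{1-\epsilon}$. The characteristic roots solve $z^2 - (1+\beta-\alpha\mu)z + \beta = 0$, so whenever the discriminant $(1+\beta-\alpha\mu)^2 - 4\beta$ is nonpositive the roots are complex conjugates of modulus $\sqrt{\beta}$. The optimal Polyak choice makes this discriminant vanish simultaneously at the two endpoints $\mu=\ell$ and $\mu=L$, forcing every mode into the complex regime with common modulus $\sqrt{\beta}$; a short computation then shows that this balancing gives exactly $\sqrt{\beta^*} = \tfrac{\sqrt{L}-\sqrt{\ell}}{\sqrt{L}+\sqrt{\ell}} = \tfrac{\epsilon}{1+\sqrt{1-\epsilon^2}}$, together with the stated $\alpha^* = (1-\beta^*)\sqrt{1-\epsilon^2}$, which matches the claimed rate.

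I expect the main obstacle to be the passage from this spectral picture to the clean per-iteration contraction asserted in the statement: for momentum methods the spectral radius $\sqrt{\beta}$ of the associated two-step operator is not its one-step operator norm (at the endpoints the repeated root even produces a transient polynomial factor), so the bound must be obtained on the augmented state $(w^{i+1}, w^i)$ in the energy norm in which the companion operator is normal and has norm equal to its spectral radius, and then read back onto $\lscost{e^i}{D^{-1}}$. The probabilistic $(1-\delta)$ guarantee is immediate afterwards, since everything above is deterministic on the event of \eqref{condition:1}, which holds with probability at least $1-\delta$. Finally, the \ff{Dual M-IHS} claim follows by repeating the argument verbatim with $A$ replaced by $A^T$ and $f$ by $g$, using the basis of $[A\ \sqrt{\lambda}I_n]^T$ that defines $U_1$ in the under-determined case.
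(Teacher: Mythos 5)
Your proposal is correct and follows essentially the same route as the paper's proof: your matrix $N = (I_d+Q-P)^{-1}$ is exactly the paper's $(\widehat{A}^T\widehat{S}^T\widehat{S}\widehat{A})^{-1}$ arising from its ``partly exact'' sketch (since $U_1^TS^TSU_1 + U_2^TU_2 = Q + I_d - P$), your change of variables $w^i = \Sigma V^T e^i$ is the paper's coordinate transformation $y = D^{-1}V^Tx$, and both arguments then decouple into the same scalar recursions with characteristic polynomial $u^2-(1+\beta-\alpha\mu_i)u+\beta=0$ and choose $(\alpha,\beta)$ by balancing at the spectral endpoints $1/(1\pm\epsilon)$. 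The only substantive difference is that you are more careful at the final step: the paper simply asserts that complex roots of modulus $\sqrt{\beta}$ yield a per-iteration contraction of exactly $\sqrt{\beta}$, whereas you correctly observe that the companion matrix's one-step operator norm exceeds its spectral radius, so the clean per-iteration inequality needs the augmented-state energy-norm argument you sketch (or must be interpreted asymptotically) --- a gap the paper's own proof glosses over.
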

\begin{proof}\label{proof:lemma_conv}
In the following analysis we denote $A = U\Sigma V^T$ as the compact SVD with $U\in\realss{n}{r}$, $\Sigma = \diag(\sigma_1,\ldots,r)\in\realss{r}{r}$ and $V \in\realss{d}{r}$ where $r =\min(n,d)$. To prove the theorem for the \ff{M-IHS} and the \ff{Dual M-IHS}, we mainly combine the idea of \textit{partly exact} sketching, that is proposed in \cite{ref:acw16_sharper}, with the Lyapunov analysis, that we use in \cite{ref:mihs}. In parallel to \cite{ref:acw16_sharper}, we define the diagonal matrix $D:=(\Sigma^2 + \lambda I_{r})^{-1/2}$ and the \textit{partly exact} sketching matrix $S$ as:
\begin{equation*}
    \widehat{S} = \begin{bmatrix}S &\mathbf{0}\\\mathbf{0} & I_r\end{bmatrix}, \quad S\in\realss{m}{\max(n,d)}.
\end{equation*}
\paragraph{The proof for M-IHS} 
Let
\begin{equation*}
    \widehat{A} = \begin{bmatrix} U\Sigma D \\ \sqrt{\lambda} VD\end{bmatrix} = \begin{bmatrix} U_1 \\U_2\end{bmatrix}, \quad \widehat{A}^T\widehat{A} = I_d, \quad \widehat{b} = \begin{bmatrix}b\\\mathbf{0}\end{bmatrix},
\end{equation*}
so that $U_1$ is the first $n$ rows of an orthogonal basis for $[A^T\ \ \sqrt{\lambda} I_d ]^T$ as required by the condition in \eqref{condition:1} of the theorem. To simplify the Lyapunov analysis, the following LS problem will be used:
\begin{equation}
    y^* = \swrite{argmin }{y\in\reals{d}}\lscost{\widehat{A}y - \widehat{b}}{2}^2 \label{eq:equi1}
\end{equation}
which is equivalent to the problem in \eqref{eq:intro} due to the one-to-one mapping $\{\forall x(\lambda)\in\reals{d}\ | \ y^* = D^{-1}V^Tx(\lambda)\}$. For the problem in \eqref{eq:equi1}, the equivalent of the \ff{M-IHS} given in \eqref{eq:mihs1} is the following update:
\begin{align*}
    \Delta y^{i} &= \swrite{argmin}{y} \lscost{\widehat{S}\widehat{A}y}{2}^2 - 2\langle \widehat{A}^T(\widehat{b} - \widehat{A}y^i), \ y \rangle \\
    y^{i+1} &= y^i + \alpha\Delta y^i + \beta(y^i - y^{i-1})
\end{align*}
with sketched matrix
\begin{equation*}
     \quad \widehat{S}\widehat{A} = \begin{bmatrix} SU\Sigma D\\ \sqrt{\lambda}VD\end{bmatrix} = \begin{bmatrix} SU_1\\U_2\end{bmatrix}.
\end{equation*}
Thus, we can examine the following bipartite transformation to find out the convergence properties of the \ff{M-IHS}:
\begin{equation*}
    \begin{bmatrix}
    y^{i+1} - y^* \\
    y^{i} - y^*
    \end{bmatrix} = \underbrace{\begin{bmatrix}
    (1+\beta)I_d - \alpha (\widehat{A}^T\widehat{S}^T\widehat{S}\widehat{A})^{-1} & -\beta I_d\\
    I_d & \mathbf{0}
    \end{bmatrix}}_{T} \begin{bmatrix}
    y^{i} - y^* \\
    y^{i-1} - y^*
    \end{bmatrix}.
\end{equation*}
The contraction ratio of the transformation, which is determined by the eigenvalues, can be found analytically by converting the matrix $T$ into a block diagonal form through the following similarity transformation:
\begin{align*}
T &= P^{-1} \diag(T_1,\ldots, T_d) P,\text{ where } T_i := \begin{bmatrix}
    1 + \beta - \alpha \mu_i & \beta \\
    1 & 0
\end{bmatrix},\quad
P = \begin{bmatrix}
    \Psi & 0 \\
    0 & \Psi
\end{bmatrix} \Pi,\quad
\Pi_{i,j} = \left\{
\small\begin{array}{rl}
1 & \text{i \mbox{is odd} } j = i,\\
1 &  \text{i \mbox{is even} } j = r+i,\\
0 &  \text{otherwise,}
\end{array} \right.
\end{align*}
$\Psi \Xi \Psi^T$ is the Spectral decomposition of $(\widehat{A}^T\widehat{S}^T\widehat{S}\widehat{A})^{-1}$ and $\mu_i$ is the $i^{th}$ eigenvalue \cite{ref:mihs}. The characteristic polynomials of each block is
\begin{equation}
    u^2 - (1+\beta - \alpha \mu_i)u + \beta = 0, \quad \forall i \in [r].\label{eq:characteristic}
\end{equation}
If the following condition holds
\begin{equation}
    \beta \geq (1 - \sqrt{\alpha \mu_i})^2, \quad \forall i \in [r] \label{condition:dynamic},
\end{equation}
then both of the roots are imaginary and both have a magnitude $\sqrt{\beta}$ for all $\mu_i$'s. In this case, all linear dynamical systems driven by the above characteristic polynomial will be in the under-damped regime and the contraction rate of the transformation $T$, through all directions, not just one of them, will be exactly $\sqrt{\beta}$. If the condition in \eqref{condition:dynamic} is not satisfied for a $\mu_i$ with $i\in [r]$, then the linear dynamical system corresponding to $\mu_i$ will be in the over-damped regime and the contraction rate in the direction through the eigenvector corresponding to this over-damped system will be smaller compared to the others. As a result, the overall algorithm will be slowed down (see \cite{ref:candes_adaptive} for details). If the condition in \eqref{condition:1} of Theorem \ref{lemma:mihs_conv} holds,
\begin{equation*}
    \lscost{\widehat{A}^T\widehat{S}^T\widehat{S}\widehat{A} - I_r}{2} = \lscost{U_1^TS^TSU_1 +U_2^TU_2- I_r}{2}=\lscost{U_1^TS^TSU_1 - U_1^TU_1}{2}\leq \epsilon,
\end{equation*}
then, we have the following bounds:
\begin{equation*}
    \swrite{sup }{\lscost{v}{2} = 1}v^T\widehat{A}^T\widehat{S}^T\widehat{S}\widehat{A}v \leq 1+\epsilon \quad\mbox{   and   }\quad \swrite{inf }{\lscost{v}{2} = 1}v^T\widehat{A}^T\widehat{S}^T\widehat{S}\widehat{A}v \geq 1-\epsilon,
\end{equation*}
which are equivalent to:
\begin{equation*}
    \swrite{maximize}{ i \in [r]} \mu_i \leq \frac{1}{1-\epsilon} \quad \mbox{   and   }\quad \swrite{minimize}{ i \in [r]} \mu_i \geq \frac{1}{1+\epsilon}.
\end{equation*}
Consequently, the condition in \eqref{condition:dynamic} can be satisfied for all $\mu_i$'s by the following choice of $\beta$ that minimizes the convergence rate over step size $\alpha$
\begin{equation*}
    \sqrt{\beta^*} = \swrite{minimize}{\alpha}\!\left(\max\left\{1 - \frac{\sqrt{\alpha}}{\sqrt{1 +\epsilon}},\ \frac{\sqrt{\alpha}}{\sqrt{1 - \epsilon}} - 1 \right\}\right) =  \frac{\epsilon}{1 + \sqrt{1 -\epsilon^2}},
\end{equation*}
where the minimum is achieved at $\alpha^* = \frac{4(1 - \epsilon^2)}{(\sqrt{1+\epsilon}+\sqrt{1 - \epsilon})^2} = (1- \beta^*)\sqrt{1 - \epsilon^2}$ as claimed.
\paragraph{The proof for the Dual M-IHS}
The proof of the under-determined case is parallel to the over-determined one except for the following modifications. Let
\begin{equation}
    \widehat{A}^T = \begin{bmatrix} V\Sigma D \\ \sqrt{\lambda} UD\end{bmatrix}=\begin{bmatrix} U_1\\U_2\end{bmatrix}, \ \widehat{A}\widehat{A}^T = I_n \ \mbox{ and }\ \widehat{S}\widehat{A}^T = \begin{bmatrix} SV\Sigma D\\ \sqrt{\lambda}UD\end{bmatrix} = \begin{bmatrix} SU_1\\U_2\end{bmatrix},\label{eq:dual_notation}
\end{equation}
so that $U_1$ is the first $d$ rows of an orthogonal basis for $[A \ \ \sqrt{\lambda} I_n]$ as required by the theorem. Similar to the \ff{M-IHS} case, the Lyapunov analysis can be simplified by using the following formulation
\begin{equation*}
    w^* = \swrite{argmin }{w\in\reals{n}} = \frac{1}{2}\lscost{\widehat{A}^Tw}{2}^2 - \langle DU^Tb,\ w\rangle,
\end{equation*}
which is equivalent to the dual problem in \eqref{eq:intro_dual} due to the one-to-one mapping $\left\{\forall \nu(\lambda) \in\reals{n}\ |\ w^* = D^{-1}U^T\nu(\lambda) \right\}$. For this form, the equivalent of the \ff{Dual M-IHS} given in \eqref{eq:dual_mihs1} is
\begin{align*}
    \Delta w^{i} &= \swrite{argmin}{w} \lscost{\widehat{S}\widehat{A}^Tw}{2}^2 - 2\langle DU^Tb -  \widehat{A}\widehat{A}^Tw^i, \ w \rangle,\\
    w^{i+1} &= w^i + \alpha \Delta w^i + \beta(w^i - w^{i-1}).
\end{align*}
Therefore, we can analyze the following bipartite transformation to figure out the convergence properties of the \ff{Dual M-IHS}
\begin{equation*}
    \begin{bmatrix}
    w^{i+1} - w^* \\
    w^{i} - w^*
    \end{bmatrix} = \underbrace{\begin{bmatrix}
    (1+\beta)I_n - \alpha (\widehat{A}\widehat{S}^T\widehat{S}\widehat{A}^T)^{-1} & -\beta I_n\\
    I_n & \mathbf{0}
    \end{bmatrix}}_{T} \begin{bmatrix}
    w^{i} - w^* \\
    w^{i-1} - w^*
    \end{bmatrix}.
\end{equation*}
The rest of the proof can be completed straightforwardly by following the same analysis steps as in the proof for the \ff{M-IHS} case.
\end{proof}
Note that, Theorem \ref{lemma:mihs_conv} is also valid for the un-regularized problems if, instead of \eqref{condition:1}, the following condition is satisfied
\begin{equation*}
    \prob{S\sim\D}{\lscost{U^TS^TSU - I_d}{2} \geq \epsilon} < \delta, \quad \epsilon \in(0,1).
\end{equation*}
When the necessary conditions are met, the number of iterations needed for both algorithms to reach a certain level of accuracy is stated in the following corollary.
\begin{corollary}\label{corollary:iter}
For some $\epsilon\in(0, 1/2)$ and arbitrary $\eta$, if the sketching matrix meets the condition in \eqref{condition:1} and the fixed momentum parameters are chosen as in Theorem \ref{lemma:mihs_conv}, then the number of iterations for the {\ff{M-IHS}} and the {\ff{Dual M-IHS}} to obtain an $\eta$-optimal solution approximation in $\ell2$-norm is upper bounded by
$$N = \left\lceil \frac{\log(\eta)\log(C)}{\log(\epsilon) - \log(1 + \sqrt{1 - \epsilon^2})}\right\rceil$$
where the constant $C$, that is defined as $C = \sqrt{\kappa(A^TA+\lambda I_d)}$ for the {\ff{M-IHS}} and $C = \kappa(A)\sqrt{\kappa(AA^T+\lambda I_n)}$ for the {\ff{Dual M-IHS}}, can be removed if the semi-norm in Theorem \ref{lemma:mihs_conv} is used as the solution approximation metric instead of the $\ell2$ norm.
\end{corollary}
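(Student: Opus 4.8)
The plan is to convert the per-iteration contraction of Theorem \ref{lemma:mihs_conv} into an iteration count by first unrolling the geometric recursion and then paying for the change of norm from the seminorm $\|\cdot\|_{D^{-1}}$ to the $\ell2$ norm that defines the target accuracy in \eqref{eq:sol_approx}. Writing $e^i = x^i - x^*$ and $\rho = \epsilon/(1+\sqrt{1-\epsilon^2})$ for the contraction factor, Theorem \ref{lemma:mihs_conv} immediately gives $\|e^N\|_{D^{-1}} \le \rho^N\|e^0\|_{D^{-1}}$. Since the seminorm is generated, through the change of variables $y = D^{-1}V^Tx$ used in the proof of the theorem, by the positive definite matrix $A^TA+\lambda I_d$ (indeed $\|e\|_{D^{-1}}^2 = e^T(A^TA+\lambda I_d)e$), I would sandwich it as $\sigma_{\min}(A^TA+\lambda I_d)^{1/2}\|e\|_2 \le \|e\|_{D^{-1}} \le \sigma_{\max}(A^TA+\lambda I_d)^{1/2}\|e\|_2$, so that converting the left-hand side at step $N$ and the right-hand side at step $0$ yields $\|e^N\|_2 \le \sqrt{\kappa(A^TA+\lambda I_d)}\,\rho^N\|e^0\|_2 = C\rho^N\|e^0\|_2$ with $C = \sqrt{\kappa(A^TA+\lambda I_d)}$.

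With $x^0 = 0$ (so $\|e^0\|_2 = \|x^*\|_2$), enforcing the $\eta$-optimality condition $\|e^N\|_2 \le \eta\|x^*\|_2$ of \eqref{eq:sol_approx} reduces to the scalar inequality $C\rho^N \le \eta$. Taking logarithms and dividing by $\log\rho = \log\epsilon - \log(1+\sqrt{1-\epsilon^2}) < 0$ solves for $N$, and rounding up produces the claimed iteration count, with $C$ entering precisely through the norm-equivalence factor $\sqrt{\kappa(A^TA+\lambda I_d)}$. The final clause of the corollary is then immediate: if accuracy is measured directly in the seminorm $\|\cdot\|_{D^{-1}}$ of Theorem \ref{lemma:mihs_conv}, the conversion step is skipped, $\|e^N\|_{D^{-1}} \le \rho^N\|e^0\|_{D^{-1}}$ is already in the right norm, the requirement becomes $\rho^N \le \eta$, and $C$ disappears from the bound.

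For the \ff{Dual M-IHS}, I would run the same argument on the dual iterate $\nu^i$, whose seminorm is now generated by $AA^T+\lambda I_n$, giving $\|\nu^N - \nu^*\|_2 \le \sqrt{\kappa(AA^T+\lambda I_n)}\,\rho^N\|\nu^0-\nu^*\|_2$. The extra step, and the place I expect the most care, is translating this back to the primal error through the primal-dual relation $x^* = A^T\nu^*$ of \eqref{eq:dual_relation}: applying $A^T$ gives $\|x^N - x^*\|_2 = \|A^T(\nu^N-\nu^*)\|_2 \le \sigma_{\max}(A)\|\nu^N-\nu^*\|_2$, while the relative normalization contributes a $1/\sigma_{\min}(A)$ factor via $\|x^*\|_2 \ge \sigma_{\min}(A)\|\nu^*\|_2$ on the range of $A^T$. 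These two condition-number contributions then compose into $C = \kappa(A)\sqrt{\kappa(AA^T+\lambda I_n)}$, and the remaining iteration-count computation is identical to the primal case. The main obstacle is therefore not any single estimate but the careful bookkeeping of these two stacked norm equivalences in the dual, together with verifying that the map $x = A^T\nu$ restricted to the range of $A^T$ is invertible so that the $\sigma_{\min}(A)$ bound is legitimate; here again, measuring error in the dual seminorm bypasses both conversions and removes $C$.
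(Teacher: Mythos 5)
Your approach is exactly the intended one: the paper gives no separate proof (it declares the corollary ``an immediate result'' of Theorem \ref{lemma:mihs_conv}), and the argument it has in mind is precisely your chain --- unroll the per-iteration contraction $\rho = \epsilon/(1+\sqrt{1-\epsilon^2})$, convert the seminorm $\|\cdot\|_{D^{-1}}$ to the $\ell 2$ norm via the equivalence $\sigma_{\min}(A^TA+\lambda I_d)^{1/2}\|e\|_2 \le \|e\|_{D^{-1}} \le \sigma_{\max}(A^TA+\lambda I_d)^{1/2}\|e\|_2$, and, in the dual case, pull the dual error back to the primal through $x = A^T\nu$, which stacks the extra $\kappa(A)$ onto $\sqrt{\kappa(AA^T+\lambda I_n)}$. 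Your identification $\|e\|_{D^{-1}}^2 = e^T(A^TA+\lambda I_d)\,e$ (respectively $\nu^T(AA^T+\lambda I_n)\nu$ in the dual, valid since $U$, $V$ are square orthogonal matrices in the full-rank over/under-determined regimes) is what makes both constants $C$ come out exactly as stated, and your caveat that $\|A^T\nu\|_2 \ge \sigma_{\min}(A)\|\nu\|_2$ requires full row rank of $A$ is the right one for the regime $n \ll d$ in which the Dual M-IHS operates. The bound $\|x^N-x^*\|_2 \le \sqrt{\kappa(A^TA+\lambda I_d)}\,\rho^N\|x^*\|_2$ that your argument produces is also the one the paper itself uses in its figure captions.

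One point you should not have glossed over: solving $C\rho^N \le \eta$ gives $N \ge (\log\eta - \log C)/\log\rho$, i.e., $\log(\eta/C)$ in the numerator, whereas the corollary as printed has the \emph{product} $\log(\eta)\log(C)$. These are not equal (with $\eta = 10^{-4}$, $C = 10^{4}$, $\rho = 1/\sqrt{2}$ the two expressions give roughly $53$ versus $245$ iterations), so your assertion that taking logarithms and rounding up ``produces the claimed iteration count'' is not literally true: your (correct) derivation produces the difference formula, and the printed product formula appears to be a typo in the paper. Two internal checks support this reading: a product of logarithms is not invariant under a change of logarithm base, so it cannot be a well-defined iteration count; and the corollary's remark that $C$ ``can be removed'' when the seminorm is used corresponds naturally to deleting $\log C$ from a difference (leaving $\log\eta/\log\rho$), whereas deleting a factor $\log C$ from a product would absurdly give $N = 0$. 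Stating this discrepancy explicitly, rather than claiming agreement with the printed formula, is the only substantive correction your write-up needs.
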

Corollary \ref{corollary:iter} is an immediate result of Theorem \ref{lemma:mihs_conv}. To satisfy the condition in \eqref{condition:1}, a set of cases for the sketching matrix $S$ are given in Lemma \ref{corollary:sketch}.
\begin{lemma} \label{corollary:sketch}
If the sketching matrix $S$ is chosen in one of the following cases, the condition in \eqref{condition:1} of Theorem \ref{lemma:mihs_conv} is satisfied.
\renewcommand{\theenumi}{\roman{enumi}}%
\begin{enumerate}
    \item\label{item1} $S$ is a Sparse Subspace Embedding \cite{ref:w14_sketching_asatool} with single nonzero element in each column, with a sketch size
    $$m =\Omega\left( \mathbf{sd}_\lambda(A)^2/(\epsilon^2\delta)\right)$$
    where $\Omega(\cdot)$ notation is defined as $a(n) = \Omega(b(n))$, if there exists two integers $k$ and $n_0$ such that $\forall n > n_0$, $a(n)\geq k\cdot b(n)$. For this case, $SA$ is computable in $O(\mathbf{nnz}(A))$ operations.
    \item\label{item2} $S$ is a Sparse Subspace Embedding with 
    $$s = \Omega(\log_\alpha(\sd(A)/\delta)/\epsilon)$$
    non-zero elements in each column where $\alpha> 2$, $\delta<1/2$, $\epsilon<1/2$, \cite{ref:kn14_sparser_jl, ref:nn13_osnap}, with a sketch size    
    $$m = \Omega(\alpha\cdot\sd(A)\log(\sd(A)/\delta)/\epsilon^2).$$
    For this case, $SA$ is computable in $O(s\cdot\mathbf{nnz}(A))$ operations.
    \item\label{item3} $S$ is a SRHT sketching matrix \cite{ref:cnw15_optimal_stable_rank, ref:ihs} with a sketch size
    $$m= \Omega\left(\left(\mathbf{sd}_\lambda(A)+\log(1/\epsilon\delta)\log(\mathbf{sd}_\lambda(A)/\delta)\right)/\epsilon^{2}\right).$$
    For this case, $SA$ is computable in $O(nd\log(m))$ operations.
    \item\label{item4} $S$ is a Sub-Gaussian sketching matrix \cite{ref:pilanci1, ref:cnw15_optimal_stable_rank} with a sketch size
    $$m = \Omega(\sd(A)/\epsilon^2).$$ 
    For this case, $SA$ is computable in $O(ndm)$ operations.
\end{enumerate}
\end{lemma}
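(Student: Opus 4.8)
The plan is to read condition \eqref{condition:1} as a spectral-norm \emph{Approximate Matrix Multiplication} (AMM) guarantee for the single matrix $U_1$ multiplied with itself, and then to invoke the sharpest available AMM / JL-moment estimate for each sketching family. The decisive preliminary step is to express the relevant norms of $U_1$ in terms of the statistical dimension. Using the representation $U_1 = U\Sigma D$ with $D=(\Sigma^2+\lambda I_r)^{-1/2}$ established in the proof of Theorem \ref{lemma:mihs_conv}, and $U^TU = I_r$, I would compute
\begin{equation*}
    U_1^TU_1 = D\Sigma U^TU\Sigma D = \Sigma^2(\Sigma^2+\lambda I_r)^{-1} = \diag\!\left(\frac{\sigma_i^2}{\sigma_i^2+\lambda}\right),
\end{equation*}
so that $\lscost{U_1}{F}^2 = \tr{U_1^TU_1} = \sum_i \sigma_i^2/(\sigma_i^2+\lambda) = \sd(A)$ while $\lscost{U_1}{2}^2 = \sigma_1^2/(\sigma_1^2+\lambda)\le 1$. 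The Frobenius mass of $U_1$ is therefore exactly the statistical dimension and its spectral norm is at most one; this is precisely what lets one replace rank-dependent embedding bounds by $\sd(A)$-dependent ones.

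The general mechanism is then uniform across the four cases. A relative-error spectral AMM statement has the form $\lscost{U_1^TS^TSU_1-U_1^TU_1}{2}\le \epsilon'\lscost{U_1}{2}^2$ with failure probability below $\delta$ once $m$ exceeds a quantity growing in the stable rank $\tilde r = \lscost{U_1}{F}^2/\lscost{U_1}{2}^2 = \sd(A)/\lscost{U_1}{2}^2$ and decaying in $\epsilon'$. Choosing $\epsilon' = \epsilon/\lscost{U_1}{2}^2 \ge \epsilon$ converts this into the absolute bound \eqref{condition:1}, and since $\tilde r/\epsilon'^2 = \sd(A)\lscost{U_1}{2}^2/\epsilon^2 \le \sd(A)/\epsilon^2$, every surviving factor of $\lscost{U_1}{2}^2\le 1$ works in our favour and leaves a clean $\sd(A)/\epsilon^2$ dependence.

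I would then dispatch the cases. For (iv), subgaussian $S$, I would apply the stable-rank spectral AMM bound of \cite{ref:cnw15_optimal_stable_rank} (equivalently an intrinsic-dimension matrix Bernstein estimate) directly to $U_1$, obtaining $m=\Omega(\sd(A)/\epsilon^2)$. For (iii), the SRHT, I would use the subspace-embedding / AMM analysis of \cite{ref:cnw15_optimal_stable_rank, ref:ihs}, where the additive $\log(1/\epsilon\delta)\log(\sd(A)/\delta)$ term reflects the concentration of the sample-and-Hadamard operator at the intrinsic dimension $\sd(A)$. For (ii), the OSNAP embedding with $s$ nonzeros per column, I would invoke the high-probability spectral guarantees of \cite{ref:kn14_sparser_jl, ref:nn13_osnap} with stable rank $\sd(A)$, which yields the coupled requirements on $m$ and $s$. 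For (i), the single-nonzero CountSketch, an optimal-size high-probability spectral bound is unavailable, so I would instead control the Frobenius norm through its second moment: the CountSketch AMM identity gives $\expect{\lscost{U_1^TS^TSU_1-U_1^TU_1}{F}^2} = O(\lscost{U_1}{F}^4/m) = O(\sd(A)^2/m)$, whence Markov's inequality together with $\lscost{\cdot}{2}\le\lscost{\cdot}{F}$ drives the failure probability below $\delta$ as soon as $m=\Omega(\sd(A)^2/(\epsilon^2\delta))$, which is exactly why this case alone carries the $\sd(A)^2$ and the $1/\delta$ factor.

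The main obstacle is not any individual concentration inequality but the insistence on statistical-dimension scaling: standard subspace-embedding arguments give bounds in the rank $r=\min(n,d)$, and securing $\sd(A)\le r$ in its place demands that each invoked result be its \emph{intrinsic-dimension / stable-rank} version rather than a worst-case counterpart, and that the absolute-to-relative conversion above be executed so that the benign factor $\lscost{U_1}{2}^2\le 1$ is never discarded in a way that reintroduces rank dependence. The remaining bookkeeping is to verify that the cited AMM bounds apply verbatim to the self-product $U_1^TS^TSU_1$, rather than only to a two-matrix product or to a strictly orthonormal basis.
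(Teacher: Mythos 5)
Your proposal is correct and follows essentially the same route as the paper: the same identities $\lscost{U_1}{F}^2=\sd(A)$, $\lscost{U_1}{2}^2\le 1$, followed by the stable-rank/statistical-dimension AMM machinery of \cite{ref:cnw15_optimal_stable_rank} applied to the self-product $A=B=U_1$, with the CountSketch case handled separately by a Frobenius second-moment plus Markov argument (which is exactly what the paper's chain through the $(\epsilon',\delta,2)$-JL moment property of \cite{ref:tz12} and Theorem 6.2 of \cite{ref:kn14_sparser_jl} formalizes). The only cosmetic difference is that you package the CNW15 bound in relative-error/stable-rank form and rescale $\epsilon'=\epsilon/\lscost{U_1}{2}^2$, whereas the paper uses the OSE-moment-to-AMM mixture form and sets the parameter $k\propto\sd(A)$; these are interchangeable.
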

\begin{proof}\label{proof:cor_sketch}
The following identities will be used on $U_1$ where $U,\Sigma$, and $D$ are defined in the proof of Theorem \ref{lemma:mihs_conv}:
\begin{equation*}
    \lscost{U_1}{F}^2 = \lscost{U\Sigma D}{F}^2 = \lscost{\Sigma(\Sigma^2 +\lambda I_r)^{-1/2}}{F}^2 = \sum\limits_{i=1}^{r}\frac{\sigma_i^2}{\sigma_i^2 + \lambda} = \sd(A),
\end{equation*}
and $\lscost{U_1}{2}^2 = \frac{\sigma_1^2}{\sigma_1^2 + \lambda} \approx 1$ for a properly chosen regularization parameter $\lambda$. If the sketch matrix $S$ is drawn from a randomized distribution $\D$ over matrices $\realss{m}{n}$, then by using the Approximate Matrix Property (AMM) which is given below, it will be proven that the condition in \eqref{condition:1} can be met with a desired level of probability.

As proven in \cite{ref:cnw15_optimal_stable_rank}, if a distribution $\D$ over $S\in\realss{m}{n}$ has the $(\epsilon, \delta, 2k,\ell)$-OSE moment property for some $\delta <1/2$ and $\ell\geq2$, then it has $(\epsilon, \delta, k)$-AMM Property for any $A,B$, i.e.,
\begin{equation}
    \prob{S\sim\D}{\lscost{A^TS^TSB - A^TB}{2} > \epsilon\sqrt{\left(\lscost{A}{2}^2+ \frac{\lscost{A}{F}^2}{k}\right)\left(\lscost{B}{2}^2 + \frac{\lscost{B}{F}^2}{k}\right)}} < \delta.\label{theorem:amm_ose}
\end{equation}
The definition of the OSE-moment property can be found in \cite{ref:cnw15_optimal_stable_rank}. As it will be detailed next, using the AMM property in \eqref{theorem:amm_ose}, the sketch sizes in the statement of Lemma \ref{corollary:sketch} can be found relative to the embedding size $k$ to satisfy the condition in \eqref{condition:1} of Theorem \ref{lemma:mihs_conv}.

For case $(i)$ of Lemma \ref{corollary:sketch}, Count Sketch with a single nonzero element in each column and size $m\geq 2/({\epsilon'}^2\delta)$ has $(\epsilon', \delta,2)$-JL moment property \cite{ref:tz12}. JL-Moment Property can be found in Definition 6.1 of \cite{ref:kn14_sparser_jl}. By Theorem 6.2 in \cite{ref:kn14_sparser_jl}:
\begin{equation*}
    \lscost{U_1S^TSU_1 - U_1^TU_1}{F} < 3\epsilon'\lscost{U_1}{F}^2 = 3\epsilon'\sd(A) \leq \epsilon
\end{equation*}
for $\epsilon' = \epsilon/(3\sd(A))$. So, condition in \eqref{condition:1} holds with probability at least $1-\delta$, if $m = O(\sd(A)^2/(\epsilon^2\delta))$. 

For case $(ii)$ of Lemma \ref{corollary:sketch}, combining Theorem 4.2 of \cite{ref:coh16_nearly_tight} and Remark 2 of \cite{ref:cnw15_optimal_stable_rank} implies that any sketch matrix drawn from an OSNAP \cite{ref:nn13_osnap} with the conditions given in case $(ii)$ of Lemma \ref{corollary:sketch} satisfies the $(\epsilon', \delta, k, \log(k/\delta))$-OSE moment property thus the $(\epsilon', \delta, k/2)$-AMM Property. Setting $A=B=U_1$  and $k = \sd(A)/2$ in \eqref{theorem:amm_ose} gives:
\begin{equation*}
    \lscost{U_1^TS^TSU_1 - U_1^TU_1}{2} \leq \epsilon'(\lscost{U_1}{2}^2 + 2) \leq 3 \epsilon' \leq \epsilon
\end{equation*}
with probability of at least $(1-\delta)$.
\begin{remark}
Based on the lower bounds established for any OSE in \cite{ref:nn14_lower}, the Conjecture 14 in \cite{ref:nn13_osnap} states that any OSNAP with 
$m=\Omega((k+\log(1/\delta))/\epsilon^2)$ and 
$s=\Omega(\log(k/\delta)/\epsilon)$ have the $(\epsilon,\delta,k,\ell)$-OSE moment property for $\ell = \Theta(\log(k/\delta))$, an even integer. If this conjecture is proved, then by the AMM property in \eqref{theorem:amm_ose}, the condition in \eqref{condition:1} can be satisfied with probability at least $(1-\delta)$ by using an OSNAP matrix with size $m=\Omega((\sd(A)+\log(1/\delta))/\epsilon^2)$ and sparsity $s=\Omega(\log(\sd(A)/\delta)/\epsilon)$.
\end{remark}

For case $(iii)$ of Lemma \ref{corollary:sketch}, by Theorem 9 of \cite{ref:cnw15_optimal_stable_rank}, SRHT with the sketch size given in case $(iii)$ has the $(\epsilon', \delta, 2\sd(A),$ $ \log(\sd(A)/\delta))$-OSE moment property and thus it provides $(\epsilon', \delta, \sd(A))$-AMM property. Again, setting $A=B=U_1$ and $k=\sd(A)$ in \eqref{theorem:amm_ose} produces the desired result.

For case $(iv)$ of Lemma \ref{corollary:sketch}, the Subgaussian matrices having entries with mean zero and variance $1/m$ satisfy the JL Lemma \cite{ref:jl_lemma} with optimal sketch size \cite{ref:kn14_sparser_jl}. Also, they have the $(\epsilon/2, \delta, \Theta(\log(1/\delta)))$-JL moment property \cite{ref:kmn_11_almost_optimal}. Thus by Lemma 4 of \cite{ref:cnw15_optimal_stable_rank} such matrices have $(\epsilon,\delta,k,\Theta(k+\log(1/\delta)))$-OSE moment property for $\delta < 9^{-k}$, which means $m=\Omega(k/\epsilon^2)$. Again, by setting $A=B=U_1$ and $k = \sd(A)$ in \eqref{theorem:amm_ose} produces the desired result.
\end{proof}
Lemma \ref{corollary:sketch} suggests that in order to satisfy the condition in Theorem \ref{lemma:mihs_conv}, the sketch size can be chosen proportional to the statistical dimension of the coefficient matrix which can be considerably smaller than its rank. Moreover, to obtain a solution approximation, the second condition in Lemma 11 of \cite{ref:acw16_sharper} is not a requirement, hence we obtained slightly better results for the sparse subspace embeddings in the cases of $(i)$ and $(ii)$ of Lemma \ref{corollary:sketch}. In the following corollary, we obtained substantially simplified empirical versions of the convergence rate, momentum parameters and required sketch size by using the MPL and approximating the filtering coefficients of Tikhonov regularization with binary coefficients. Corollary \ref{corollary:empirical} suggests that the ratio between the statistical dimension and the sketch size determines the convergence rate of the proposed algorithms, which interestingly seems valid even for the sketch matrices with a single non-zero element in each column.
\begin{corollary}\label{corollary:empirical}
If the entries of the sketching matrix are independent, zero mean, unit variance with bounded higher order moments, and the Truncated SVD regularization with truncation parameter $\lceil \sd(A) \rceil$ is used, then the {\ff{M-IHS}} and the {\ff{Dual M-IHS}} with the following momentum parameters
\begin{equation*}
    \beta = \frac{\sd(A)}{m}, \quad\quad \alpha = (1- \beta)^2
\end{equation*}
will converge to the optimal solutions $x^*$ and $\nu^*$ respectively with a convergence rate of $\sqrt{\beta}$ as $m\to \infty$ while $\sd(A)/m$ remains constant. Any sketch size $m > \sd(A)$ can be chosen to obtain an $\eta$-optimal solution approximation in most $\frac{\log(\eta)}{\log(\sqrt{\beta})}$ iterations. 
\end{corollary}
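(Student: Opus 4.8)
The plan is to reduce the statement to the very same Heavy Ball minimization carried out in the proof of Theorem \ref{lemma:mihs_conv}, but with the generic interval $[1-\epsilon,\ 1+\epsilon]$ replaced by the \emph{exact} Marchenko--Pastur (MP) support. First I would observe that using the Truncated SVD regularization with truncation parameter $k=\lceil \sd(A)\rceil$ is precisely the binary-filter approximation of Tikhonov: it sets the squared filtering coefficient $\sigma_i^2/(\sigma_i^2+\lambda)$ to $1$ for $i\le k$ and to $0$ otherwise. Inserting these binary coefficients into the construction used in the proof of Theorem \ref{lemma:mihs_conv}, the columns of $U_1=U\Sigma D$ become $u_i$ for $i\le k$ and $\mathbf{0}$ for $i>k$, while those of $U_2=\sqrt{\lambda}VD$ become $v_i$ for $i>k$ and $\mathbf{0}$ for $i\le k$. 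Consequently $\widehat{A}^T\widehat{S}^T\widehat{S}\widehat{A}=U_1^TS^TSU_1+U_2^TU_2$ is block diagonal in the singular basis: a top $k\times k$ block equal to the Gram matrix of $S$ applied to the orthonormal columns $u_1,\dots,u_k$, and a bottom block equal to the identity coming from the exactly reproduced (discarded) directions. Since the unit eigenvalues of the bottom block always lie inside the MP support, the binding spectrum is entirely that of the $m\times k$ sketched block.

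Next I would invoke the MP law on this $k\times k$ block. Because the entries of $S$ are independent, zero mean, unit variance with bounded higher order moments, and the columns $u_1,\dots,u_k$ are orthonormal, $SU_1$ behaves (after the $1/\sqrt{m}$ normalization) like an $m\times k$ random matrix with i.i.d.\ entries, so as $m\to\infty$ with $\gamma:=k/m=\sd(A)/m$ held fixed, the empirical spectrum of $U_1^TS^TSU_1$ converges to the MP distribution supported on $[\ell,L]=[(1-\sqrt{\gamma})^2,\ (1+\sqrt{\gamma})^2]$. The essential ingredient is not the bulk but the convergence of the \emph{extreme} eigenvalues to the edges $\ell$ and $L$; this is the Bai--Yin type statement for which the bounded higher order moment hypothesis is exactly what is needed, and it is also why the claim is stated asymptotically rather than with an explicit failure probability $\delta$.

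Finally I would re-run the step-size/momentum minimization of the proof of Theorem \ref{lemma:mihs_conv} verbatim, but with the eigenvalues $\mu_i$ of $(\widehat{A}^T\widehat{S}^T\widehat{S}\widehat{A})^{-1}$ now ranging over $[1/L,\ 1/\ell]$ instead of $[1/(1+\epsilon),\ 1/(1-\epsilon)]$. Imposing the under-damped condition \eqref{condition:dynamic} at both endpoints and minimizing over $\alpha$ forces both constraints to be tight, which gives
\begin{equation*}
    \sqrt{\beta}=\frac{\sqrt{L}-\sqrt{\ell}}{\sqrt{L}+\sqrt{\ell}}=\frac{(1+\sqrt{\gamma})-(1-\sqrt{\gamma})}{(1+\sqrt{\gamma})+(1-\sqrt{\gamma})}=\sqrt{\gamma},
\end{equation*}
hence $\beta=\gamma=\sd(A)/m$, and from the tight constraint $\alpha=L(1-\sqrt{\beta})^2=(1+\sqrt{\gamma})^2(1-\sqrt{\gamma})^2=(1-\gamma)^2=(1-\beta)^2$, as claimed; the resulting contraction factor of $T$ through every direction is exactly $\sqrt{\beta}$. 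The iteration-count statement is then immediate: iterating $\lscost{x^{i+1}-x^*}{D^{-1}}\le\sqrt{\beta}\,\lscost{x^i-x^*}{D^{-1}}$ yields error $(\sqrt{\beta})^N$ relative to the initial one, so demanding $(\sqrt{\beta})^N\le\eta$ gives at most $\log(\eta)/\log(\sqrt{\beta})$ iterations, the ratio being positive since both logarithms are negative for $\eta,\beta\in(0,1)$. I expect the main obstacle to be the second step: turning the asymptotic MP edge convergence into the clean endpoints $\ell,L$ that drive the momentum design, since everything downstream hinges on the extreme eigenvalues rather than on the limiting bulk.
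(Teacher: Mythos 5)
Your proposal is correct and follows essentially the same route as the paper's proof: replace the Tikhonov filter coefficients by the binary TSVD coefficients so that $\widehat{A}^T\widehat{S}^T\widehat{S}\widehat{A}$ becomes block diagonal with a sketched $k\times k$ Gram block plus an identity block, invoke the Marchenko--Pastur law to place the extreme eigenvalues at $(1\pm\sqrt{\sd(A)/m})^{2}$, and re-run the Heavy Ball parameter optimization from Theorem \ref{lemma:mihs_conv} to obtain $\sqrt{\beta}=\sqrt{\sd(A)/m}$ and $\alpha=(1-\beta)^2$. If anything, you are more explicit than the paper, which compresses your final optimization into ``the rest follows from Theorem \ref{lemma:mihs_conv}''; your observation that the edge (Bai--Yin type) convergence, not the bulk law, is what the bounded-higher-moment hypothesis is really for is a point the paper leaves implicit.
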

\begin{proof}\label{proof:empirical}
Consider the regularized LS solution with parameter $\lambda$ and the Truncated SVD solution with parameter $\lceil\sd(A)\rceil$:
\begin{equation}
    x^* = \sum\limits_{i=1}^r \frac{\sigma_i^2}{\sigma_i^2 + \lambda}\frac{u_i^Tb}{\sigma_i}v_i \quad\mbox{ and }\quad x^\dagger = \sum\limits_{i=1}^{\lceil\sd(A)\rceil}\frac{u_i^Tb}{\sigma_i}v_i \label{eq:tsvdsol}
\end{equation}
where $u_i$'s and $v_i$'s are columns of $U$ and $V$ matrices in the SVD. The Tikhonov regularization with the closed form solution is preferred in practice to avoid the high computational cost of the SVD. The filtering coefficients of the Tikhonov regularization, $\frac{\sigma_i^2}{\sigma_i^2 + \lambda}$, become very close to the binary filtering coefficients of the TSVD, as the decay rate of the singular values of $A$ increases. In these cases, $x^*$ and $x^\dagger$ in \eqref{eq:tsvdsol} are very close to each other (Section 4 and 5 of \cite{ref:hansen_book1}). Thereby, the diagonal matrix $\Sigma D$ which is used in the proof of Lemma \ref{corollary:sketch} can be approximated by the diagonal matrix $\Pi$ where
\begin{equation*}
    \Pi_{ii} = \left\{
\begin{array}{rl}
1 & \text{if } i \leq \sd(A) \leq r\\
0 &  otherwise
\end{array} \right.,
\end{equation*}
which is equivalent to replacing the Tikhonov coefficients by the binary coefficients. Then, we have the following close approximation:
\begin{align*}
    \left(\widehat{A}^T\widehat{S}^T\widehat{S}\widehat{A}\right)^{-1} &= \left(D\Sigma U^TS^TSU\Sigma D + \lambda D^2\right)^{-1}\\ &\approx \left(\Pi (SU)^T(SU)\Pi + I_r - \Pi\right)^{-1} = \left[\begin{array}{c|c}
        \widebar{S}^T\widebar{S}& \textbf{0}\\
        \hline
        \textbf{0} & I_{(r - \sd(A))}
    \end{array}\right]^{-1},
\end{align*}
where $\widebar{S} = SU\Pi\in\realss{m}{\sd(A)}$ has the same distribution as $S$, since $U\Pi$ is an orthonormal transformation. By the MPL, the minimum and the maximum eigenvalues of this approximation converge to $\left(1 \pm \sqrt{\frac{\sd(A)}{m}}\right)^{-2}$ as $m\to \infty$ and while $\sd(A)/m$ remains constant \cite{ref:edelman}. The rest of the proof follows from the analysis given in the proof of Theorem \ref{lemma:mihs_conv}.
\end{proof}%
\begin{figure}[h]
\subfigure[{Dense problem, SRHT sketch via DCT} \label{fig:convergence_1}]{\includegraphics[width=0.5\linewidth]{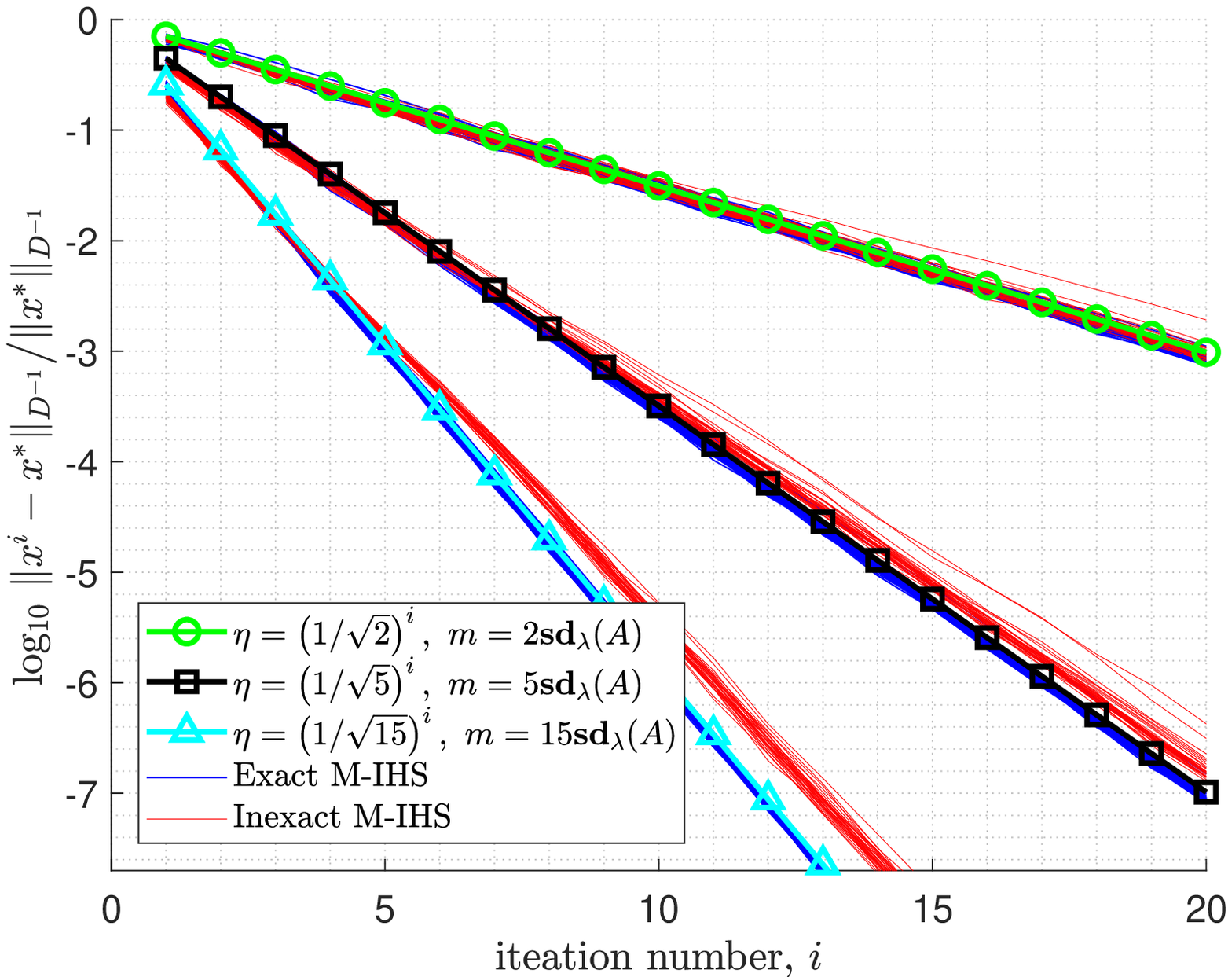}}%
\subfigure[{Sparse problem, Count sketch}\label{fig:convergence_2}]{\includegraphics[width=0.5\linewidth]{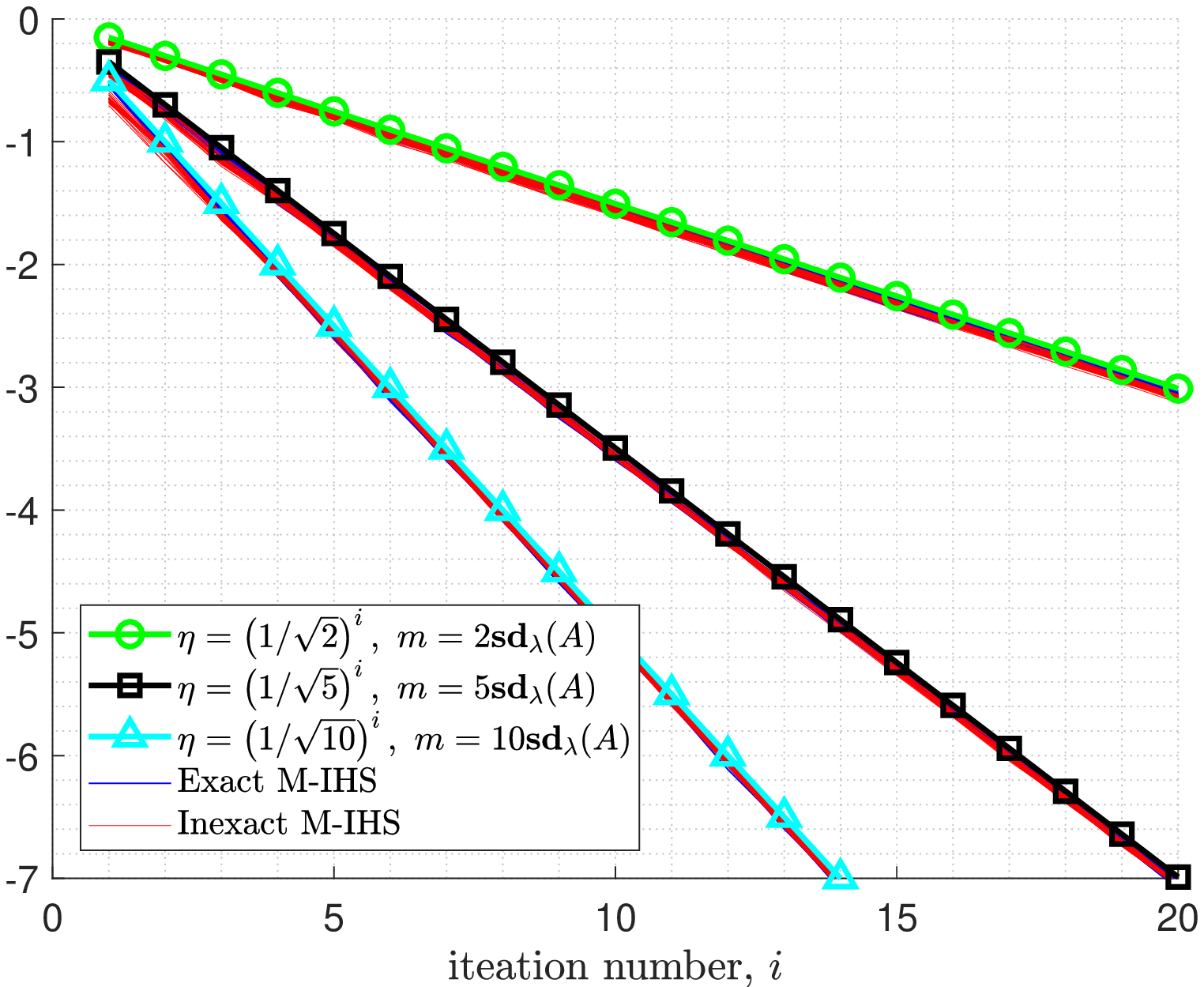}}
\caption{{Comparison of the theoretical rate given in Corollary \ref{corollary:empirical} and the empirical convergence rate. The lines with different markers show the theoretical convergence rate for different sketch sizes. Both the \textit{exact} and the \textit{inexact} (given in Algorithm \ref{algo:mihs}) versions of the \ff{M-IHS} were run $32$ times and the result of each run is plotted as a separate line. Except for a small degradation in the dense case, setting the forcing term to a small constant such as $\epsilon_{sub} = 0.1$ is sufficient for the inexact scheme to achieve the same rate as the exact version in these experiments.}}
\label{fig:convergence}
\end{figure}
%
Although the MPL provides bounds for the singular values of the sketching matrix $S$ in the asymptotic regime, i.e., as $m\to\infty$; these bounds become very good estimators of the actual bounds when $m$ takes finite values, as demonstrated in Figure \ref{fig:convergence}.  In Figure \ref{fig:convergence_1}, $A\in\realss{32768}{1000}$ with $\kappa(A) = 10^8$ was generated as described in Section \ref{sec:ex_setup}. In Figure \ref{fig:convergence_2}, $A\in\realss{24336}{1296}$ was generated by using \textit{sprand} command of MATLAB. We first created a sparse matrix with size $\Tilde{A}\in\realss{20}{6}$ and sparsity of $15\%$, then the final form was obtain by taking $A = \Tilde{A}^{\otimes 4}$ and deleting the all-zero rows. The final form of $A$ has a sparsity ratio of $0.1\%$ and the condition number of $\kappa(A) = 10^7$. The noise level was set to $1\%$ and the regularization parameter $\lambda$ that minimizes the error $\|x_0 - x(\lambda)\|$ was used in both experiments. The resulting statistical dimensions were $119$ and $410$, respectively. The rate of $\sqrt{\beta}$ in Corollary \ref{corollary:empirical} creates a remarkable fit to the numerical convergence rate of the \ff{M-IHS} variants when the momentum parameters given in Corollary \ref{corollary:empirical} are used even for the Tikhonov regularization. This is because the sigmoid-like filtering coefficients in the Tikhonov regularization can be thought of as the smoothed version of the binary coefficients in the TSVD solution and therefore the binary coefficients constitute a good approximation for the filtering coefficients of the Tikhonov regularization.

\begin{remark}
The momentum parameters given in Corollary \ref{corollary:empirical} maximizes the convergence rate when the statistical dimension is known. If $\sd(A)$ is overestimated and thus $\beta$ is chosen larger than the ratio $\sd(A)/m$ and $\alpha = (1-\beta)^2$, then the convergence rate is still $\sqrt{\beta}$ since all the dynamical systems in \eqref{eq:characteristic} will be still in the under-damped regime. An empirical algorithm to estimate $\sd(A)$ by using the Hutchinson-like estimators is detailed in Section \ref{sec:sd}.
\end{remark}

\subsection{Efficient M-IHS sub-solvers}\label{sec:efficent_sub}
In practice, the \ff{M-IHS} and the \ff{Dual M-IHS} eliminate the dominant term $O(md^2)$ in the complexity expression of well known solvers such as the Blendenpik and the LSRN by approximately solving the lower dimensional linear systems in \eqref{eq:mihs1} and \eqref{eq:dual_mihs1} avoiding matrix decompositions or inversions. This \textit{inexact sub-solver} approach provides a trade-off opportunity between the computational complexity and the convergence rate, that is highly desirable in very large dimensional problems. Unfortunately, such a trade-off is not possible for the Blendenpik and the LSRN techniques which require a full matrix decomposition of the sketched matrix. Inexact sub-solvers have been known to be a good heuristic way to create this trade-off and they are widely used in the algorithms that are based on the Newton Method to solve the large scale normal equations \cite{ref:nocedal}. In these inexact (or truncated) Newton Methods, inner iterations are terminated at the moment that the relative residual error is lower than an iteration-dependent threshold, named as the \textit{forcing terms} \cite{ref:forcing_term}. In the literature, there are various techniques to choose these forcing terms that guarantee a global convergence \cite{ref:inexact_survey}, but the number of iterations suggested by these techniques are significantly higher than the total number of iterations used in practice. Therefore, in this work the heuristic constant threshold $\epsilon_{sub}$, that checks the relative residual error of the linear system, is used \cite{ref:nocedal2}.

Efficient but approximate solutions to the sub-problems in \eqref{eq:mihs1} and \eqref{eq:dual_mihs1} can be obtained by Krylov Subspace based first order methods. However, LSQR-like solvers that are adapted for the normal equations would require computations of 4 matrix-vector multiplications per iteration. On the other hand, due to the explicit calculation of $(SA)^T(SA)z$, the symmetric CG, that would require only 2 matrix-vector multiplications, might be unstable for the ill-conditioned problems \cite{ref:lsqr}. Therefore, in Section \ref{sec:subsolver}, we propose a stable sub-solver, referred to as \ff{AAb\_Solver}, which is particularly designed for the problems in the form of $A^TAx=b$. The \ff{AAb\_Solver} is based on the Golub Kahan Bidiagonalization and it uses a similar approach that the LSQR uses on the LS problem. The inexact versions of the \ff{M-IHS} and the \ff{Dual M-IHS} that use \ff{AAb\_Solver} are given in Algorithm \ref{algo:mihs} and Algorithm \ref{algo:dual_mihs}, 
\begin{algorithm}[!htbp]
\setstretch{1.2}
\caption{\ff{M-IHS} (for $n\geq d$)}
\label{algo:mihs}
\begin{algorithmic}[1]
\STATE \textit{Input}: $A, \ b,\ m,\ \lambda,\ N,\ x^1,\ \sd(A),\ \epsilon_{sub}$\hfill $\gray{complexity}$
\STATE \hspace{1pt}$S\!A = $ \ff{RP\_fun}$(A,m)$\hfill$\gray{C(m,n,d)}$
\STATE $\phantom{A}\beta = \sd(A)/m$\hfill$\gray{O(1)}$  
\STATE$\phantom{A}\alpha = (1- \beta)^2$\hfill$\gray{O(1)}$
\FOR{$i=1:N$}
\STATE$\hspace{8pt}g^i = A^T(b - Ax^i) - \lambda x^i$ \hfill$\gray{4nd + 3d}$
\STATE$\hspace{2pt}\Delta x^{i} = $ \ff{AAb\_Solver}$(S\!A, \ g^i,\  \lambda,\  \epsilon_{sub})$ \hfill $\gray{O(md)}$ 
\STATE${x^{i+1}} = x^i + \alpha \Delta x^i + \beta(x^i - x^{i-1})$\hspace{275pt}$\gray{5d }$ 

\ENDFOR
\end{algorithmic}
\end{algorithm}%
\begin{algorithm}[!htbp]
\setstretch{1.2}
\caption{\ff{Dual M-IHS} (for $n\leq d$)}
\label{algo:dual_mihs}
\begin{algorithmic}[1]
\STATE \textit{Input}: $A, \ b,\ m,\ \lambda,\ N,\ \sd(A),\ \epsilon_{sub}$\hfill$\gray{complexity}$
\STATE \hspace{1pt}$S\!A^T = $ \ff{RP\_fun}$(A^T,m)$\hfill$\gray{C(m,n,d)}$
\STATE $\phantom{A^T}\beta = \sd(A)/m$\hfill$\gray{O(1)}$  
\STATE$\phantom{A^T}\alpha = (1- \beta)^2$\hfill$\gray{O(1)}$
\STATE\hspace{10pt}$\nu^1 = 0$\hfill$\gray{O(1)}$
\FOR{$i=1:N$}
\STATE$\hspace{8pt}g^i = b - AA^T\nu^i -\lambda\nu^i$\hfill$\gray{4nd+3n}$
\STATE$\hspace{2pt}\Delta \nu^{i} = $ \ff{AAb\_Solver}$(S\!A^T, \ g^i,\  \lambda,\  \epsilon_{sub})$\hfill$\gray{O(mn)}$
\STATE${\nu^{i+1}} = \nu^i + \alpha \Delta \nu^i + \beta(\nu^i - \nu^{i-1})$ \hspace{275pt}$\gray{5n}$
\ENDFOR
\STATE${x}^{N+1} = A^T{\nu}^{N+1}$\hfill$\gray{2nd}$
\end{algorithmic}
\end{algorithm}
where \ff{RP\_fun} represents the function that generates the desired sketched matrix such that $\expectt{}{S^TS} = I_m$ whose implementation details can be found in the relevant references in Lemma \ref{corollary:sketch}. Number of operations required at each step is stated at the right most column of the algorithms, where $C(\cdot)$ represents the complexity of constructing the sketching matrix as given in Lemma \ref{corollary:sketch}. Setting the forcing term $\epsilon_{sub}$, for instance, to 0.1 for all iterations is enough for the inexact \ff{M-IHS} variants to converge at the same rate $\sqrt{\beta}$ as the exact versions as demonstrated in Figure \ref{fig:convergence}.

\subsection{Two-stage sketching for the M-IHS variants}
Lemma \ref{corollary:sketch} suggests that if the statistical dimension is several times smaller than the dimensions of $A$, then for the M-IHS techniques, it is possible to choose a substantially smaller sketch size than $\min(n,d)$. If this is the case, then the quadratic objective functions in \eqref{eq:mihs1} and \eqref{eq:dual_mihs1} become strongly under-determined problems, which makes it possible to approximate the Hessian of the objective functions one more time by taking their convex dual as it has been done in the \ff{Dual M-IHS}. This approach is similar to the approach where the problems in \eqref{eq:mihs1} and \eqref{eq:dual_mihs1} are approximately solved by using the \ff{AAb\_Solver}, with an additional dimension reduction. As a result of two Hessian sketching, the linear sub-problem whose dimensions are reduced from both sides can be efficiently solved by the \ff{AAb\_Solver} for a pre-determined tolerance as before. For the details of this two-step approach, consider the following dual of the sub-problem in \eqref{eq:mihs1}
\begin{equation}
    z^{i,*} = \swrite{argmin }{z\in\reals{m}} \underbrace{\frac{1}{2}\lscost{A^TS^Tz + \nabla f(x^i)}{2}^2 + \frac{\lambda}{2}\lscost{z}{2}^2}_{h(z, x^i)},
\end{equation}
which is a strongly over-determined problem if $m\ll \min(n,d)$. Hence, it can be approximately solved by the \ff{M-IHS} updates as
\begin{align}
    \Delta z^{i,j} &= \swrite{argmin }{z\in\reals{m}} \lscost{WA^TS^Tz}{2}^2 + \lambda\lscost{z}{2}^2 + 2\left\langle \nabla_z h(z^{i,j}, x^i), \ z\right\rangle \label{eq:pd_mihs1},\\
    z^{i, j+1}&= z^{i,j} + \alpha_2\Delta z^{i,j} + \beta_2 \left(z^{i,j} - z^{i,j-1}\right). \nonumber
\end{align}
After $M$ iterations, the solution of \eqref{eq:mihs1} can be recovered by using the relation in \eqref{eq:dual_relation} as $\Delta x^i = (\nabla f(x^i) - A^TS^Tz^{i,M})/\lambda$. The same strategy can be applied on the sub-problem in \eqref{eq:dual_mihs1} by replacing $SA$ with $SA^T$ and $\nabla f(x^i)$ with $\nabla g(\nu^i)$. The resulting algorithms, referred to as \ff{Primal Dual M-IHS}, are given in Algorithm \ref{algo:pd_mihs_1} and Algorithm \ref{algo:pd_mihs_2}, respectively.%
\begin{algorithm}[!htbp]
\setstretch{1.2}
\caption{ \ff{Primal Dual M-IHS} (for $n\leq d$)}
\label{algo:pd_mihs_1}
\begin{algorithmic}[1]
\STATE \textit{Input}: $A, \ b,\ m_1,\ m_2, \ \lambda, \ N, \ M,\ \sd(A),\ \epsilon_{sub}$\hfill$\gray{complexity}$
\STATE $\hspace{9pt}SA^T = $ \ff{RP\_fun}$(A^T,m_1)$\hfill$\gray{C(m_1,n,d)}$
\STATE $WAS^T = $ \ff{RP\_fun}$(SA^T,m_2)$\hfill$\gray{C(m_1,m_2, n)}$
\STATE $\hspace{15pt}\beta_\ell = \sd(A)/m_\ell,\hspace{15pt} \ell=1,2$\hfill$\gray{O(1)}$  
\STATE$\hspace{15pt}\alpha_\ell = (1- \beta_\ell)^2, \hspace{22pt} \ell = 1,2$\hfill$\gray{O(1)}$
\STATE\hspace{16pt}$\nu^{1} = 0, \ z^{1,1} = 0$\hfill$\gray{O(1)}$
\FOR{$i=1:N$}
\STATE${b}^i = b - AA^T\nu^i - \lambda\nu^i$\hfill$\gray{4nd+3n}$
\FOR{$j=1:M$}
\STATE\hspace{8pt}$g^{i,j} = SA^T({b}^i - AS^T z^{i,j}) - \lambda z^{i,j}$\hfill$\gray{4nm_1+3m_1}$
\STATE$\hspace{2pt}\Delta z^{i,j} =$ \ff{AAb\_Solver}$(WAS^T,  g^{i,j},\  \lambda,\ \epsilon_{sub})$\hfill$\gray{O(m_1m_2)}$
\STATE${z^{i,j+1}} = z^{i,j} + \alpha_2 \Delta z^{i,j} + \beta_2(z^{i,j} - z^{i,j-1})$\hspace{230pt}$\gray{5m_1}$
\ENDFOR
\STATE\hspace{2pt}$\Delta\nu^i = ({b}^i - AS^Tz^{i,M+1})/\lambda, \quad z^{i+1,1} = z^{i,M+1}$\hfill$\gray{2nm_1+2n}$ 
\STATE$\nu^{i+1} = \nu^i + \alpha_1 \Delta\nu^i +\beta_1(\nu^i - \nu^{i-1})$\hspace{271pt}$\gray{5n}$
\ENDFOR
\STATE$x^{N+1} = A^T\nu^{N+1}$\hfill$\gray{2nd}$
\end{algorithmic}
\end{algorithm}%
\begin{algorithm}[!htbp]
\setstretch{1.2}
\caption{\ff{Primal Dual M-IHS} (for $n\geq d$)}
\label{algo:pd_mihs_2}
\begin{algorithmic}[1]
\STATE \textbf{Input}: $A, \ b,\ m_1,\ m_2, \ N, M, \ \lambda,\ x^1,\ \sd(A),\ \epsilon_{sub}$\hfill$\gray{complexity}$
\STATE $\hspace{19pt}SA = $ \ff{RP\_fun}$(A,m_1)$\hfill$\gray{C(m_1,n,d)}$
\STATE $WA^TS^T = $ \ff{RP\_fun}$((SA)^T,m_2)$\hfill$\gray{C(m_1,m_2,d)}$
\STATE $\hspace{21pt}\beta_\ell = \sd(A)/m_\ell,\hspace{15pt} \ell=1,2$\hfill$\gray{O(1)}$  
\STATE$\hspace{21pt}\alpha_\ell = (1- \beta_\ell)^2, \hspace{22pt} \ell = 1,2$\hfill$\gray{O(1)}$
\STATE$\hspace{17pt}z^{1,1} = z^{1,0} = 0$\hfill$\gray{O(1)}$
\FOR{$i=1:N$}
\STATE${b}^i = A^T(b - Ax^i) - \lambda x^i$\hfill$\gray{4nd+3d}$
\FOR{$j=1:M$}
\STATE $\hspace{7pt}{g}^{i,j} = SA({b}^i - A^TS^Tz^{i,j}) - \lambda z^{i,j}$\hfill$\gray{4dm_1+3m_1}$
\STATE$\hspace{2pt}\Delta z^{i,j} =$ \ff{AAb\_Solver}$(WA^TS^T,  g^{i,j},\  \lambda,\ \epsilon_{sub})$\hfill$\gray{O(m_1m_2)}$
\STATE${z^{i,j+1}} = z^{i,j} + \alpha_2 \Delta z^{i,j} + \beta_2(z^{i,j} - z^{i,j-1})$\hspace{230pt}$\gray{5m_1}$
\ENDFOR
\STATE$\hspace{2pt}\Delta x^i = ({b}^i- A^TS^Tz^{i,M+1})/\lambda, \quad z^{i+1,1} = z^{i,M+1}$\hfill$\gray{2dm_1+2d}$ 
\STATE$x^{i+1} = x^i + \alpha_1 \Delta x^i +\beta_1(x^i - x^{i-1})$\hspace{269pt}$\gray{5d}$
\ENDFOR
\end{algorithmic}
\end{algorithm}

The primal-dual idea presented here is first suggested by Zhang et al. in \cite{ref:acc_ihs}. They used the A-IHS technique to solve the sub-problems that arise during the iterations of the Accelerated Iterative Dual Random Projection (A-IDRP) which is a dual version of the A-IHS. However, since both of the A-IHS and the A-IDRP are based on the CG technique, the convergence rate of the proposed A-IHS, A-IDRP and the primal dual algorithm called as Accelerated Iterative  Primal Dual Sketch (A-IPDS) are all degraded in the LS problems with high condition numbers due to the instability issue of the symmetric CG technique \cite{ref:lsqr}. Even if the regularization is used, still the performance of the solvers proposed in \cite{ref:acc_ihs} are considerably deteriorated compared to the other randomized preconditioning techniques as shown in Section \ref{sec:numeric}. Further, applying the preconditioning idea of IHS to the stable techniques such as the LSQR that are adapted for the LS problem is not so efficient as the \ff{M-IHS} variants, because they require two preconditioning systems to be solved per iteration.

The computational saving when we apply a second dimension reduction as in the \ff{Primal Dual M-IHS} may not be significant due to the second gradient computations in \textit{Line 10} of the given algorithms, but the lower dimensional sub-problems that we obtain at the end of the second sketching can be used to estimate several parameters including the regularization parameter itself, if it is unknown. As we have shown in \cite{ref:iko_ms}, such two-stage sketching approach is particularly effective to estimate the unknown regularization parameter when the coefficient matrix is square or close to be square.  

The \ff{Primal Dual M-IHS} techniques are extension of the inexact schemes. Therefore, their convergence rates depend on their forcing terms that are used to stop the inner iterations \cite{ref:inexact_survey}. In \cite{ref:acc_ihs}, an upper bound for the error of the primal dual updates is proposed. However as it is detailed in Section \ref{sec:app}, there are several inaccuracies in the development of the bound. Therefore, finding a provably valid lower bound on the number of inner loop iterations, that guarantee a certain rate of convergence at the main loop, is still an open problem for the primal dual algorithms.
\subsection{Estimation of the statistical dimension}\label{sec:sd}
The statistical dimension $\sd(A)$ in Algorithms \ref{algo:mihs}, \ref{algo:dual_mihs}, \ref{algo:pd_mihs_1} and \ref{algo:pd_mihs_2} can be estimated by using a Hutchinson-like randomized trace estimator \cite{ref:avron_hutchinson}. Alternatively, $\sd(A)$ can be estimated by using the algorithm proposed in \cite{ref:acw16_sharper} within a constant factor in $\mathbf{nnz}(A)$ time with a constant probability, if $\sd(A) \leq \xi$ where:
\begin{equation*}
    \xi = \min\{n,d,\lfloor(n+d)^{1/3}/\mbox{poly}(\log(n+d)) \rfloor\}.
\end{equation*}
However, due to the third order root and the division by typically higher than a sixth order polynomial, $\xi$ becomes very small and the proposed algorithm in \cite{ref:acw16_sharper} can only be used when the singular values of $A$ decay severely/exponentially. %
\begin{algorithm}[!htbp]
\caption{\ff{Inexact Hutchinson Trace Estimator}}
\label{algo:trace}
\begin{algorithmic}[1]
\STATE \textbf{Input:} $SA\in\realss{m}{d},\ \lambda,\ T, \ \epsilon_{tr}$\hfill$\gray{complexity}$
\STATE $v^\ell = \{-1, +1\}^d$, \quad $\ell = 1,\ldots,T$\hfill$\gray{O(Td)}$
\STATE $\tau = 0$\hfill$\gray{O(1)}$
\FOR{i = 1:$T$}
\STATE $z^i = $ \ff{AAb\_Solver}$(SA, v^i, \lambda, \epsilon_{tr})$\hfill$\gray{O(md)}$
\STATE \hspace{2pt}$\tau = \tau + \lambda \langle v^i,\ z^i\rangle$\hspace{328pt}$\gray{2d}$
\ENDFOR
\STATE \textbf{Output}: $ \widehat{\sd} = d - \tau /T$\hfill$\gray{O(1)}$
\end{algorithmic}
\end{algorithm}%
Therefore, we preferred to use the heuristic trace estimator in Algorithm \ref{algo:trace}, where the input matrix $SA$ can be replaced with $SA^T$ or even with $WA^TS^T$ and $WAS^T$ according to the requirements of the algorithm used. Any estimator in \cite{ref:avron_hutchinson} can be substituted for the Hutchinson Estimator and the number of samples $T$ can be chosen accordingly. In the conducted experiments with various singular value profiles, small samples sizes such as $2$ or $3$ and $\epsilon_{tr} = 0.5$ was sufficient to obtain satisfactory estimates for $\sd(A)$ used in Corollary \ref{corollary:empirical}. Note that, as long as $\sd(A)$ is overestimated, the convergence rates of the proposed algorithms will be strictly controlled by $\beta$ as in Corollary \ref{corollary:empirical}.
\subsection{Complexity analyses of the proposed algorithms}\label{sec:complexity}
The iterations of both the exact and inexact \ff{M-IHS} and \ff{Dual M-IHS} consist of 4 stages with the computational complexities given in Table \ref{table:1}.
\begin{table}[!htpb]\small
\caption{Computational complexity of each stage in the M-IHS techniques}
\label{table:1}
\begin{equation*}
    \begin{array}{lcc}
        \textbf{Stage}&\textbf{Exact schemes} & \textbf{Inexact schemes}\\\hline 
         \textbf{generation of } SA \textbf{ or } SA^T & C(n,d,\sd(A)) & C(n,d,\sd(A)) \\
         \textbf{QR }(R-\textbf{factor only)} & O(r\sd(A)^2) &  N.A. \\
         \sd(A) \textbf{ estimation} & O(T\sd(A)^2)& O(T\sqrt{\kappa(\lambda)}\log(\epsilon_{tr}^{-1})\sd(A)r)\\
         1 \textbf{ iteration}& O(nd+\sd(A)^2) &  O(nd + \sqrt{\kappa(\lambda)}\log(\epsilon_{sub}^{-1})\sd(A)r)\\
    \end{array}
    \end{equation*}
\end{table}\normalsize
There, $T$ is the number of samples used in Hutchinson-like estimators, $\kappa(\lambda)$ is the condition number of $A^TA+\lambda I$, $r  =\min(n,d)$, and $C(\cdot)$ is the complexity of generating the sketched matrix which is noted in Lemma \ref{corollary:sketch}. For the proposed techniques, sketch size $m$ can be chosen proportional to the statistical dimension $\sd(A)$ which is always smaller than $r$. We assumed that the sub-problems in \eqref{eq:mihs1} and \eqref{eq:dual_mihs1} are solved by using the QR decomposition for the exact schemes. The tolerance parameters $\epsilon_{tr}$ and $\epsilon_{sub}$, that are used to terminate the sub-solvers, are typically chosen around $0.5$ or $0.1$ as noted in Sections \ref{sec:efficent_sub} and \ref{sec:sd}. The proposed techniques provide two major computational advantages over the current randomized preconditioning solvers: the first is the capability to use the sketch sizes that are smaller than $r$, and the second is the ability of avoiding the complexity of the QR step. In the inexact \ff{M-IHS} variants, the third order complexity $O(\min(mr^2, rm^2))$ of matrix decomposition or inversion are avoided by solving sub-problems in each iteration via Krylov Subspace-based solvers. Although these sub-problems incur a complexity of $O(\sqrt{\kappa(\lambda)}mr)$, the overall complexity can be reduced significantly. For the applications where $m$ grows larger, this saving becomes critical as shown in Section \ref{sec:varying}. The memory space required by all the above techniques is $O(\sd(A)r)$. In Table \ref{table:1}, it is assumed that $A$ is a dense matrix. For those cases where $A$ is sparse, further computational savings can be achieved for the proposed techniques. In the following Table \ref{table:2}, the overall computational complexities of algorithms to obtain an $\eta-$optimal solution approximation are given for both dense and sparse $A$'s.
\begin{table}[!htpb]
\caption{Overall computational complexity against baselines}
\label{table:2}
\begin{equation*}\small
    \begin{array}{lcc}
        &\textbf{Dense } A& \textbf{Sparse } A\\\hline 
        \textbf{Inexact M-IHS} & O\left(nd\log(\sd(A)) + \log\left(\eta^{-1}\right)\left(nd + \sqrt{\kappa(\lambda)}\sd(A)r\right)\right)&  O\left(\nnz(A) + \log(\eta^{-1})\left(\nnz(A) +  \sqrt{\kappa(\lambda)} \sd(A)r\right)\right)\\
        \textbf{Exact M-IHS} & O\left(nd\log(\sd(A)) + r\sd(A)^2 + \log\left(\eta^{-1}\right)nd\right) & O\big(\nnz(A) + r\sd(A)^2 + \log\left(\eta^{-1}\right)\nnz(A)\big) \\
        \textbf{Blendenpik} & O\left(nd\log(r) + r^3 + \log\left(\eta^{-1}\right)nd\right) &O\left(nd\log(r) + r^3 + \log\left(\eta^{-1}\right)\nnz(A)\right)\\
        \textbf{LSQR/CG} & O\left(\sqrt{\kappa(\lambda)}\log(\eta^{-1})nd\right) & O\left(\sqrt{\kappa(\lambda)}\log\left(\eta^{-1}\right)\nnz(A)\right)\\
        \textbf{QR} & O(ndr) &  O(ndr)^{(2)}\\
    \end{array}
\end{equation*}\normalsize
\end{table}
In Table \ref{table:2}, we assumed that the SRHT sketch matrices are used for a dense $A$ while the sparse subspace embeddings with $s$ nonzero entry in each column are used for a sparse $A$ with $\nnz(A)$ number of nonzero entries{\let\thefootnote\relax\footnote{$^2$Note that for sparse $A$, the QR algorithm that is based on Givens rotation does not provide significant complexity reduction in theory since sequentially applied Givens rotation causes fill-in in the data matrix.}}. Depending on the type of choice of the sketching used, the complexity of the proposed techniques vary significantly. For dense coefficient matrices while the SRHT matrices has lower run time in sequential environments, Gaussian matrices would be more efficient in parallel computing. If the coefficient matrix is sparse, then the data oblivious sketching types such as OSNAP or CountSketch matrices would be effective choices with run time of $O(s\nnz(A))$ where $s$ is proportional to $\log(\sd(A))$ as noted in Lemma \ref{corollary:sketch}. The proposed techniques can be still used even if the coefficient matrix is an operator, in this case Gaussian or sparse embeddings can be utilized. If the coefficient matrix is sparse or an operator that allows fast matrix-vector computations, then both the exact and inexact schemes are automatically sped up due to the saving in the gradient computation. For instance, in the sparse case, complexity of the gradient computation is reduced from $O(nd)$ to $O(\text{nnz}(A))$ as shown in Table \ref{table:2}.

In a similar manner, the complexity of each stage in the \ff{Primal Dual M-IHS} variants is given in Table \ref{table:3}.
\begin{table}[!htpb]
\caption{Computational complexity of each stage in the Primal Dual M-IHS techniques}
\label{table:3}
\begin{equation*}\small
    \begin{array}{lcc}
        \textbf{Stage}& \textbf{Exact schemes} & \textbf{Inexact schemes}\\\hline 
         \textbf{generation of } W\!AS^T \textbf{ or } W\!A^T\!S^T& C(n,d,\sd(A))+C(r,\sd(A),\sd(A)) &  C(n,d,\sd(A))+C(r,\sd(A),\sd(A)) \\
         \textbf{QR or SVD} & O(\sd(A)^3) &  N.A. \\
         \sd(A) \textbf{ estimation} & O(T\sd(A)^2)& O\left(T\sqrt{\kappa(\lambda)}\log(\epsilon_{tr}^{-1})\sd(A)^2\right)\\
         1 \textbf{ outer iteration}& O\left(nd + M(r\sd(A)+\sd(A)^2)\right) &  O\left(nd + M\sd(A)\left(r + \sqrt{\kappa(\lambda)}\log(\epsilon_{sub}^{-1})\sd(A) \right)\right)\\
    \end{array}
\end{equation*}\normalsize
\end{table}
Here, $M$ denote the number of inner iterations, and both sketch size $m_1$ and $m_2$ can be chosen proportional to the statistical dimension $\sd(A)$, e.g., $m_1=m_2=2\sd(A)$ as demonstrated in Figure \ref{fig:square_reg}. Unless $M\sd(A)\ll r$, the \ff{Primal Dual M-IHS} does not provide significant saving over the \ff{M-IHS} or the \ff{Dual M-IHS}. However, when $n$ and $d$ scale similar and the ratio $\sd(A)/r$ is very small, if the decomposition of the sketched matrix is required for parameter estimation purpose as discussed earlier (see Chapter 4 of \cite{ref:iko_ms}), then due to the decomposition of $m_2\times m_1$-dimensional doubly sketched matrix, the \ff{Primal Dual M-IHS} variants require far fewer operations then any exact schemes which need to compute the decomposition of $r\times m_1$ dimensional sketched matrix. Such conditions are prevalent, for example, in image de-blurring or seismic travel-time tomography problems \cite{ref:ir_tool}. The memory space required by \ff{Primal Dual M-IHS} techniques is $O(\sd(A)r+\sd(A)^2)$.
\subsection{A solver for linear systems in the form of \texorpdfstring{$A^TAx = b$}{Lg}}\label{sec:subsolver}
The linear sub-problems in the form of $(A^TA+\lambda I)x=b$, whose solutions are required by all four of the proposed \ff{M-IHS} variants, can be approximately solved by using the \textit{bidiag2} procedure described in \cite{ref:lsqr}, which produces an upper bidiagonal matrix as:
\begin{equation*}
    P_{k}^TAV_k = R_k =\begin{bmatrix}
    \rho_1 & \theta_2 \\
           & \ddots & \ddots\\
           & & \rho_{k-1} & \theta_k\\
           & & & \rho_k\end{bmatrix}\in \realss{k}{k},
\end{equation*}
where $P_k \in\realss{n}{k}$, $V_k\in\realss{d}{k}$ and $P_k^TP_k=V_k^TV_k = I_k$. The upper bidiagonal decomposition $R_k$ is computed by using the Lanczos-like three term recurrence:
\begin{equation*}
    \begin{split}
        AV_k &= P_{k}R_k  \\
        A^TP_{k} &= V_kR_k^T + \theta_{k+1}v^{k+1}e_{k}^T  
    \end{split} \Longrightarrow
    \begin{split}
         Av^1 &= \rho_1 p^1,\\
        A^Tp^{j} &= \rho_jv^j + \theta_{j+1} v^{j+1} \hspace{4pt} \quad j\leq k,\\
        Av^j &=\theta_{j}p^{j-1}+\rho_{j}p^{j}, \qquad j\leq k,
    \end{split}  
\end{equation*}
where $\theta_j$'s and $\rho_j$'s are chosen so that $\|v^j\|_2=\|p^j\|_2=1$, respectively. Note that $P_k$ and $V_k$ are not needed to be orthogonal in \ff{AAb\_Solver}, therefore we do not need any reorthogonalization steps. Unlike the LSQR, we choose $\theta_1v^1 = b$ with $\theta_1 = \|b\|_2$ so that the columns of the matrix $V_k$ constitute an orthonormal basis for the $k$-th order Krylov Subspace:
\begin{equation*}
    \spans\{v^1,\ldots,v^k\} = \K_k(A^TA, \ b) = \K_k(A^TA+\mu I_d,\ b), \ \forall \mu\in\reals{}_+. 
\end{equation*}
Since the Krylov Subspace is invariant under a constant shift, regularization does not affect this property. In the $k$-th iteration of the proposed \ff{AAb\_Solver}, let the solution estimate of the linear system be $x^k = V_ky^k$ for some vector $y^k\in\reals{k}$, i.e., $x^k \in\K_k(A^TA, \ b)$, then we have 
$
    (A^TA + \lambda I_d)V_ky^k = b
$ 
which implies
\begin{equation*}
    \widebar{R}_ky^k = \widebar{R}_k^{-T}V_k^Tb \stackrel{(a)}{=} \theta_1 \widebar{R}_k^{-T}e_1,
\end{equation*}
where $(a)$ is due to the choice of $v^1$ and $\widebar{R}_k$ is obtained by applying a sequence of Givens rotation on $[R_k^T \ \sqrt{\lambda}I_k]^T$ in order to eliminate the sub-diagonal elements due to the regularization \cite{ref:elden_shift}. One instance of this elimination procedure is
\begin{equation*}
    \left[\begin{array}{cc}
        \widebar{\rho}_k    &   \theta_{k+1}\\
         0              &   \rho_{k+1} \\
         0              &   0 \\
         0              &   \sqrt{\lambda}
    \end{array}\right] \rightarrow \left[\begin{array}{cc}
        \widebar{\rho}_k    &   c_k\theta_{k+1}\\
         0              &   \rho_{k+1} \\
         0              &   0 \\
         0              &   \widebar{\lambda}_{k+1}
    \end{array}\right] \rightarrow \left[\begin{array}{cc}
        \widebar{\rho}_k    &   \widebar{\theta}_{k+1}\\
         0              &   \widebar{\rho}_{k+1} \\
         0              &   0 \\
         0              &   0
    \end{array}\right] \xrightarrow[\mbox{iteration}]{\mbox{next}} 
    \left[\begin{array}{cc}
        \widebar{\rho}_{k+1}    &   \theta_{k+2}\\
         0              &   \rho_{k+2} \\
         0              &   0 \\
         0              &   \sqrt{\lambda}
    \end{array}\right],
\end{equation*}
where $ c_k = \rho_k/\widebar{\rho}_k$, $s_k = \widebar{\lambda}_k/\widebar{\rho}_k$, $\widebar{\theta}_{k+1} = c_k\theta_{k+1}$, $\widebar{\lambda}_{k+1}^2 = \lambda + (s_k{\theta_{k+1})}^2$ and $\widebar{\rho}_{k+1}=\sqrt{\rho_{k+1}^2+\widebar{\lambda}_{k+1}^2}$. Since $\widebar{R}_k$ is an upper bidiagonal matrix, the inverse always exists and $f^k:=\widebar{R}_k^{-T}e_1$ can be computed analytically as:
\begin{equation}
    \phi_1 = \frac{\theta_1}{\widebar{\rho}_1} \quad \mbox{and}\quad \phi_k = -\phi_{k-1}\frac{\widebar{\theta}_k}{\widebar{\rho}_k} \text{ where } f^k = [\phi_1,\ldots,\phi_k]^T. \label{eq:upper_difference}
\end{equation}
Furthermore, the solution at the $k$-th iteration, $x^k = V_k\widebar{R}_k^{-1}f^k$, can be obtain without computing any inversions by using the forward substitution. Define $D_k = V_k\widebar{R}_k^{-1}$:
\begin{equation*}
\left.  \begin{array}{rl}
    [D_{k-1}, \ d^k]\begin{bmatrix}
    \widebar{R}_{k-1} & e_{k-1}\widebar{\theta}_{k}\\
    0 & \widebar{\rho}_k\end{bmatrix} =&\!\!\! [V_{k-1},\ v^k]\\
    D_{k-1}\widebar{R}_{k-1} =&\!\!\! V_{k-1} \\
    \widebar{\theta}_{k}d^{k-1} + \widebar{\rho}_kd^k =&\!\!\! v^k
\end{array}\right\}  \begin{array}{rl}
    d^k =&\!\!\! (v^k - \widebar{\theta}_kd^{k-1})/{\widebar{\rho}_k} \\
    x^{k} =&\!\!\! x^{k-1} + \phi_{k}d^k,
\end{array}
\end{equation*}
and the relative residual error that will be used as a stopping criterion can be found as:
\begin{align*}
    \left\|A^TAx^k + \lambda x^{k} - b\right\|_2^2 &= \left\|A^TAV_ky^k + \lambda V_ky^k - b\right\|_2^2 = \left\|A^TP_k\widebar{R}_ky^k - b\right\|_2^2 = \left\|\left(V_k\widebar{R}_k^T+\theta_{k+1}v^{k+1}e_k^T\right)\widebar{R}_ky^{k} - b\right\|_2^2 \\
    &\stackrel{(i)}{=}{\left\|\widebar{R}_k^T\widebar{R}_ky^{k} - V_k^Tb\right\|_2^2} + \left\|\theta_{k+1}v^{k+1}e_k^T\widebar{R}_ky^{k} - \left(I-V_kV_k^T\right)b\right\|_2^2 = \left|\phi_k\widebar{\theta}_{k+1}\right| = \left|\phi_{k+1}\widebar{\rho}_{k+1}\right|.
\end{align*}
The first norm in $(i)$ is zero since the linear system is always consistent. The second term in the second norm is also zero, since $b\in\mbox{span}(V_k)$ by the initial choice of $\theta_1v^1 = b$. By definition, $f^k = \widebar{R}_ky^k$ gives the final results. The overall algorithm is given in Algorithm \ref{algo:aab_solver}. \begin{algorithm}
\caption{\ff{AAb\_Solver} (for problems in the form of $(A^TA+\lambda I)x = b$)}
\label{algo:aab_solver}
\begin{algorithmic}[1]
\STATE Input:  $A \in \realss{m}{n}, b, \lambda, \epsilon$ \hfill$\vartriangleright$ choose $\rho$ and $\theta$ to make $\|p\|_2=\|v\|_2=1$\hfill$\gray{complexity}$
\STATE $\theta_1v = b$\hfill$\gray{3n}$
\STATE $\rho p = Av$\hfill$\gray{2mn + 3m}$
\STATE $ $
\STATE $\Bar{\rho} = \sqrt{\rho^2 + \lambda}, \quad c = \rho/\Bar{\rho}, \quad s = \sqrt{\lambda/\Bar{\rho}}, \quad \phi = \theta_1/\Bar{\rho}$, $t = \infty$\hfill$\gray{O(1)}$
\STATE$d = v/\Bar{\rho}$\hfill$\gray{n}$
\STATE$x = \phi d$\hfill$\gray{n}$
\WHILE{$t \geq \epsilon$}
\STATE$\theta v := A^Tp - \rho v$\hfill$\gray{2mn + 5n}$
\STATE$\rho p := Av - \theta p$ \hfill$\gray{2mn + 5m}$
\STATE$ $
\STATE$\hspace{1pt}\Bar{\lambda}^2 := \lambda + (s\theta)^2, \quad \Bar{\theta} = c\theta$\hfill$\gray{O(1)}$
\STATE$\phantom{\theta}\Bar{\rho} := \sqrt{\rho^2 + \Bar{\lambda}^2}, \quad c = \rho/\Bar{\rho}, \quad s = \Bar{\lambda}/\Bar{\rho}$\hfill$\gray{O(1)}$
\STATE$ $
\STATE$\phantom{\phi}d := (v - \Bar{\theta} d )/\Bar{\rho}$\hfill$\gray{3m}$
\STATE$\phantom{\phi}\phi := -\phi\Bar{\theta}/\Bar{\rho}$\hfill$\gray{O(1)}$
\STATE$\phantom{\phi}x := x +\phi d$\hfill$\gray{2n}$
\STATE$\phantom{\phi:}t = |\phi\Bar{\rho}|/\theta_1$\hspace{337pt}$\gray{O(1)}$
\ENDWHILE
\end{algorithmic}
\end{algorithm}The \ff{AAb\_Solver} is also a Krylov Subspace method, therefore, it finds the solution in at most $\min(n,d,m)$ iterations in the exact arithmetic, but far fewer number of iterations is sufficient for our purpose.

Efficient solutions for linear systems in the form of $(A + \lambda I)x = b$ for a symmetric matrix or $(A^TA + \lambda I)x = b$ for a rectangular matrix have been well studied subject. In the first case, Lanczos tridiagonalization algorithm can be used for deriving a stable solver \cite{ref:symmetric_systems}. In the second case, which is our main concern, if the lower bidiagonalization processes (\textit{bidiag1} in \cite{ref:lsqr}) is used such as in \cite{ref:gazzola}, then a tridiagonal system in the form of $B_k^TB_ky^k = \theta_1e_1$ must be solved where $B_k\in\realss{k+1}{k}$ is a lower bidiagonal matrix. This system can be solved by first eliminating the lower diagonal elements in the tridiagonal matrix $B_k^TB_k$ and then by using forward substitution. However, the condition number of $B_k^TB_k$ is the square of the condition number of $B_k$ and thus increases the instability of the operations in the inexact arithmetic. Therefore, in the proposed \ff{AAb\_Solver}, we use upper bidiagonalization process to solve a tridiagonal system in the form of $R_k^TR_ky^k = \theta_1e_1$. The major advantage of this form over the one obtained by lower bidiagonal matrix $B_k$ is that $R_k^{-T}e_1$ can be calculated analytically as in \eqref{eq:upper_difference}. Then the solution $y^k$ can be obtained via forward substitution. In this way, we avoid both squaring the condition number and the elimination process of the lower diagonal entries. As a result, we obtain a solver with better stability properties and with slightly lower computational requirements.
\section{Numerical Experiments and Comparisons} \label{sec:numeric}
We compare the operation counts required by the algorithms to obtain a certain level of accuracy in the solution approximation metric. For a fair comparison, we have implemented all the proposed algorithms in this manuscript as well as those that are used for the comparisons in MATLAB which can be found in the provided in \url{https://github.com/ibrahimkurban/M-IHS}. 

\subsection{Experimental setting} \label{sec:ex_setup}
The coefficient matrix $A\in\realss{n}{d}$ was generated for various sizes as follows: we first sampled the entries of $A$ from the distribution $\mathcal{N}(1_d, \Gamma)$ where $\Gamma_{ij} = 5 \cdot 0.9^{|i-j|}$ so that the columns are highly correlated with each other. Then by using the SVD, we replaced the singular values with \textit{philips} profile provided in RegTool \cite{ref:hansen_matlab}. We scaled the singular values to set the condition number $\kappa(A)$ to $10^8$ and we used the same input signal provided by RegTool. In this way, we have obtained a challenging setup for any first order iterative solvers to compare their performances. In all the experiments, the same setup has been used unless indicated. We counted the number of operations according to Hunger's report \cite{ref:tum}. All the reported results have been obtained by averaging over 32 MC simulations.

\subsection{Compared methods and their implementation details}
We compared the proposed algorithms with the state of the art randomized preconditioning techniques which can reach any level of desired accuracy within a bounded number of iterations. In the conducted comparison study we used a total of 5 previously proposed techniques that can be briefly described as follows. The Blendenpik uses the $R$ matrix in the QR decomposition of the sketched matrix $SA$ as the preconditioning matrix for the LSQR algorithm just like the method proposed by Rokhlin et al. \cite{ref:blendenpik, ref:rokhlin} and it uses Randomized Orthonormal System (ROS) to generate the sketched matrix \cite{ref:pilanci1}. The LSRN uses the $V$ matrix in the SVD similar to the Blendenpik. In spite of its high running complexity, for parallelization purposes, the Gaussian sketch matrices are preferred in the LSRN. In addition to the LSQR, also the CS can be preferred in the LSRN as the core solver in distributed computational environments \cite{ref:lsrn}. The IHS uses the sketched Hessian as the preconditioning matrix for the Gradient Descent. The Accelerated IHS (A-IHS) uses this idea for the CG algorithm in over-determined problems. The dual counter-part of the A-IHS algorithm, A-IDRP, is shown to be faster than the Dual Random Projection algorithm proposed in \cite{ref:dual_rp}, so we did not include the DRP in the simulations. Additionally, we include a CS variant of the IHS (IHS-CS) to the comparisons: we combined the randomized preconditioning idea of the IHS with the preconditioned CS method \cite{ref:templates}. We found the bounds for the eigenvalues in the same way as in the LSRN. We have solved the low dimensional sub-problems required by all the IHS variants by taking the QR decomposition, but for \textit{inexact} schemes, we have used the proposed \ff{AAb\_Solver} with a constant forcing term. Although the inexact approach is also applicable for the accelerated algorithms proposed in \cite{ref:acc_ihs}, we did not include them in the simulations since their exact versions are outperformed by the \textit{Exact} \ff{M-IHS} variants in all settings. Except for the LSRN variants which use Gaussian sketch matrices, we used Discrete Cosine Transform in the ROS for all the compared techniques.

\subsection{Linear systems with noiseless measurements}
In the first experiment, we did not include noise in the linear system to emphasize the convergence rate that the algorithms can provide in such severely ill posed problems. To make the problem more challenging, for this experiment only we sampled the input vector $x_0$ from uniform distribution Uni$(-1, 1)$. In such scenarios, convergence rates of Krylov subspace-based iterative solvers without preconditioning fall to its minimum value since the energy of the input is distributed equally over the range space of $A$. The obtained results are shown in Figure \ref{fig:no_reg}. %
\begin{figure}[!hptb]
\centering
  \includegraphics[width=1\linewidth]{}
  \caption{{{Performance comparison on an un-regularized LS problem with size $2^{16}\times 2000$. In order to compare the convergence rates, number of iterations for all solvers are set to $N = 100$ with the same sketch size: $m = 4000$. According to Corollary \ref{corollary:empirical}, we expect the \ff{M-IHS} to reach an accuracy: $\lscost{x^N - x_0}{2} \leq \kappa(A)\lscost{x_0}{2} \left({1}\big/{\sqrt{2}}\right)^{N} = 9\cdot 10^{-8}$, which closely fits to the observed case.}}}
  \label{fig:no_reg}
\end{figure}
Due to high running time of the Gaussian sketches, $O(mnd)$, the LSRN variants require more operations (for the size of the problems considered here approximately 10 times larger) than the others. Due to the lack of inner product calculations, the \ff{M-IHS} requires slightly fewer operations than the Blendenpik, nonetheless, it reaches to the same accuracy with the LSRN-LSQR. The A-IHS algorithm has the worst performance which is expected in the un-regularized problems, since it is adapted on the CG technique that can be unstable for the un-regularized LS problems due to the high condition number \cite{ref:lsqr}. The convergence of the CS-based techniques, both of the IHS and the LSRN variants, are substantially slower than the \ff{M-IHS}, which suggests that the \ff{M-IHS} algorithm can take the CS’s place in those applications where parallel computation is an option. A similar comparison of the \ff{M-IHS} with the Accelerated Randomized Kaczmarz (ARK) and the CGLS without preconditioning has been shown in Figure 2 of \cite{ref:mihs}.
\subsection{Linear systems with noisy measurements}
We tested robustness of the methods against noise on regularized LS problems by using an additive i.i.d. Gaussian noise at level $\lscost{\omega}{2}\big/\lscost{Ax_0}{2} = 1\%$. For this purpose, the optimal regularization parameter that minimizes the error $\lscost{x^*(\lambda) - x_0}{2}$ is provided to all techniques. Each technique is allowed to conduct a total of 20 iterations. Results for strongly over-determined and strongly under-determined cases can be seen in Figures \ref{fig:over_reg} and \ref{fig:under_reg}, respectively. %
\begin{figure}[ht]
\centering
  \includegraphics[width=1\linewidth]{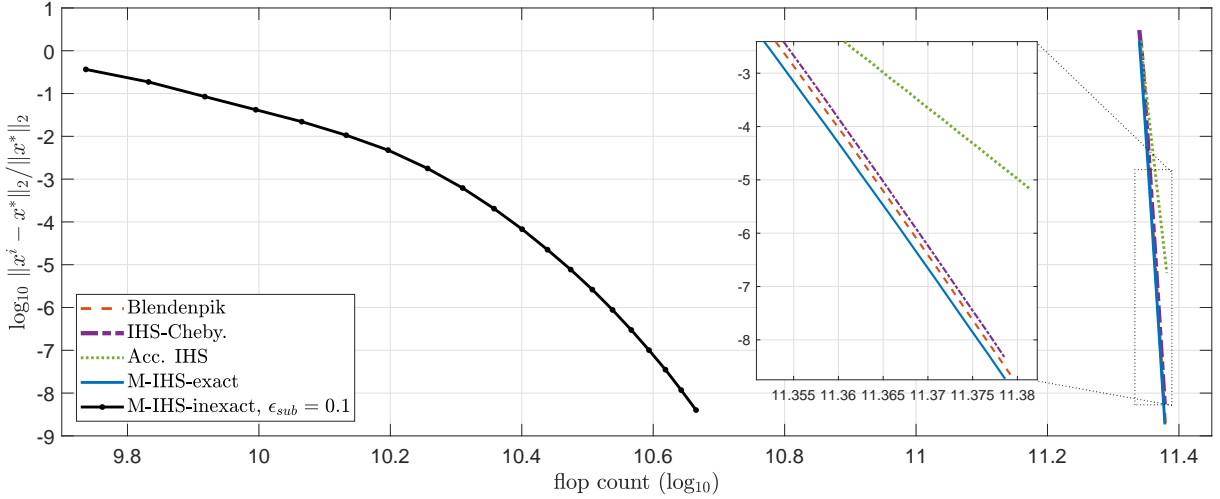}
  \caption{{Performance comparison on a regularized LS problem ($n\gg d$) with dimensions $(n,d,m, \sd(A)) = (2^{16}, 4000, 4000, 443)$. According to Corollary \ref{corollary:empirical}, \ff{M-IHS} is expected to satisfy: $\lscost{x^N - x^*}{2} \leq \lscost{x^*}{2}\sqrt{\kappa(A^TA + \lambda I_d)}\left(\sqrt{443/4000}\right)^N = 6\cdot10^{-9}$ which is almost exactly the case. The \textit{Inexact} \ff{M-IHS} requires significantly fewer operations to reach the same accuracy as others. For example to obtain an $(\eta = 10^{-4})$-optimal solution approximation, the \textit{Inexact} \ff{M-IHS} requires approximately 10 times less operations than any techniques that need factorization or inversion of the sketched matrix.}}
  \label{fig:over_reg}
 
\end{figure}%
\begin{figure}[h]
\centering
  \includegraphics[width=1\linewidth]{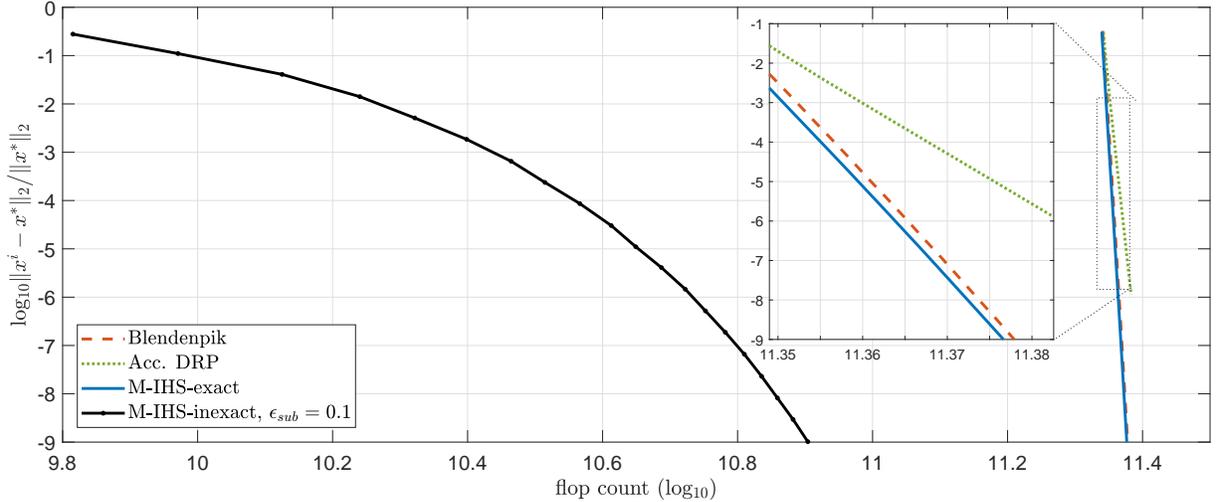}
  \caption{{Performance comparison on a regularized LS problem ($n \ll d$) with dimensions $(n,d,m,\sd(A)) = (4000, 2^{16}, 4000, 462)$. The comments in Figure \ref{fig:over_reg} are also valid for this case. The \textit{Inexact} scheme for \ff{Dual M-IHS} is capable of significantly reducing the complexity.}}
  \label{fig:under_reg}
  
\end{figure}%
We used a sketch size of $m = \min(n,d)$ to emphasize the promise of the RP techniques although such sizes are not applicable for the LSRN variants. Even if the sketch size has been increased further, the convergence of the LSRN variants were considerably slower than the others; so we leave out the LSRN variants from the comparison set in the regularized settings. Also, in the regularized setup, the A-IHS and A-IDRP methods are slower than the Blendenpik, IHS-CS and \ff{M-IHS} variants. Besides, the inexact schemes proposed for the \ff{M-IHS} and \ff{Dual M-IHS} require significantly less operations to reach to the same level of accuracy as their exact versions. Although the inexact schemes require approximately 10 times less operations then their exact versions in these setups; the saving gets larger as the sketch size increases as examined in Section \ref{sec:varying}, because while any full decomposition requires $O(mr^2)$ operations, approximately solving the sub-problem requires only $O(mr)$ operations. 

As long as the statistical dimension of the problem is small with respect to the dimensions of coefficient matrix $A$, Lemma \ref{corollary:sketch} implies eligibility of sketch sizes that are smaller than the rank, $m\leq \min(n,d)$. This implication can be verified in Figure \ref{fig:square_reg} on which we showed the performance of the \ff{Primal Dual M-IHS} techniques. %
\begin{figure}[h]
\subfigure[{$n\geq d$ and $\sd(A) = 680$}]{\includegraphics[width=0.5\linewidth]{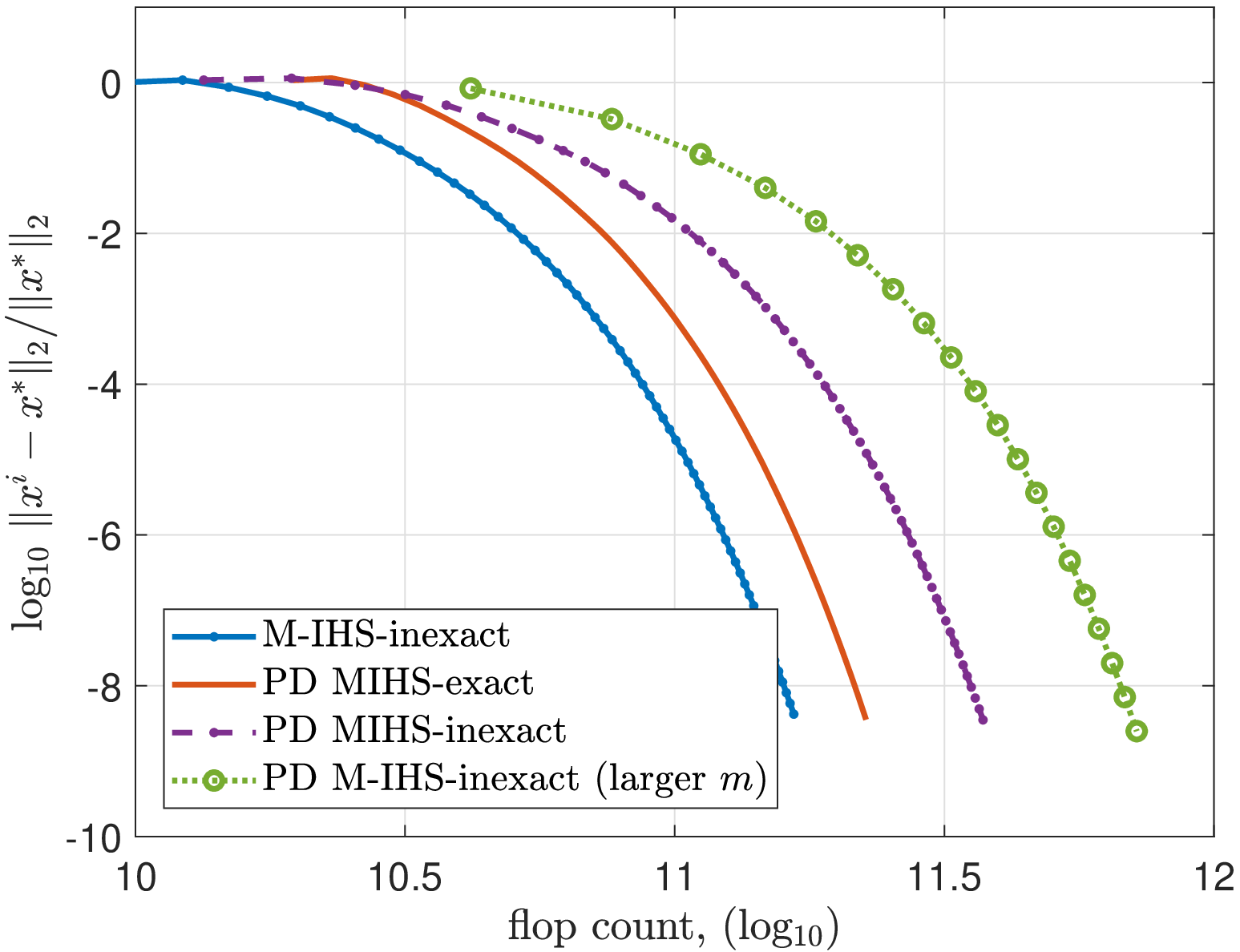}}%
\subfigure[{$n\leq d$ and $\sd(A) = 825$}]{\includegraphics[width=0.5\linewidth]{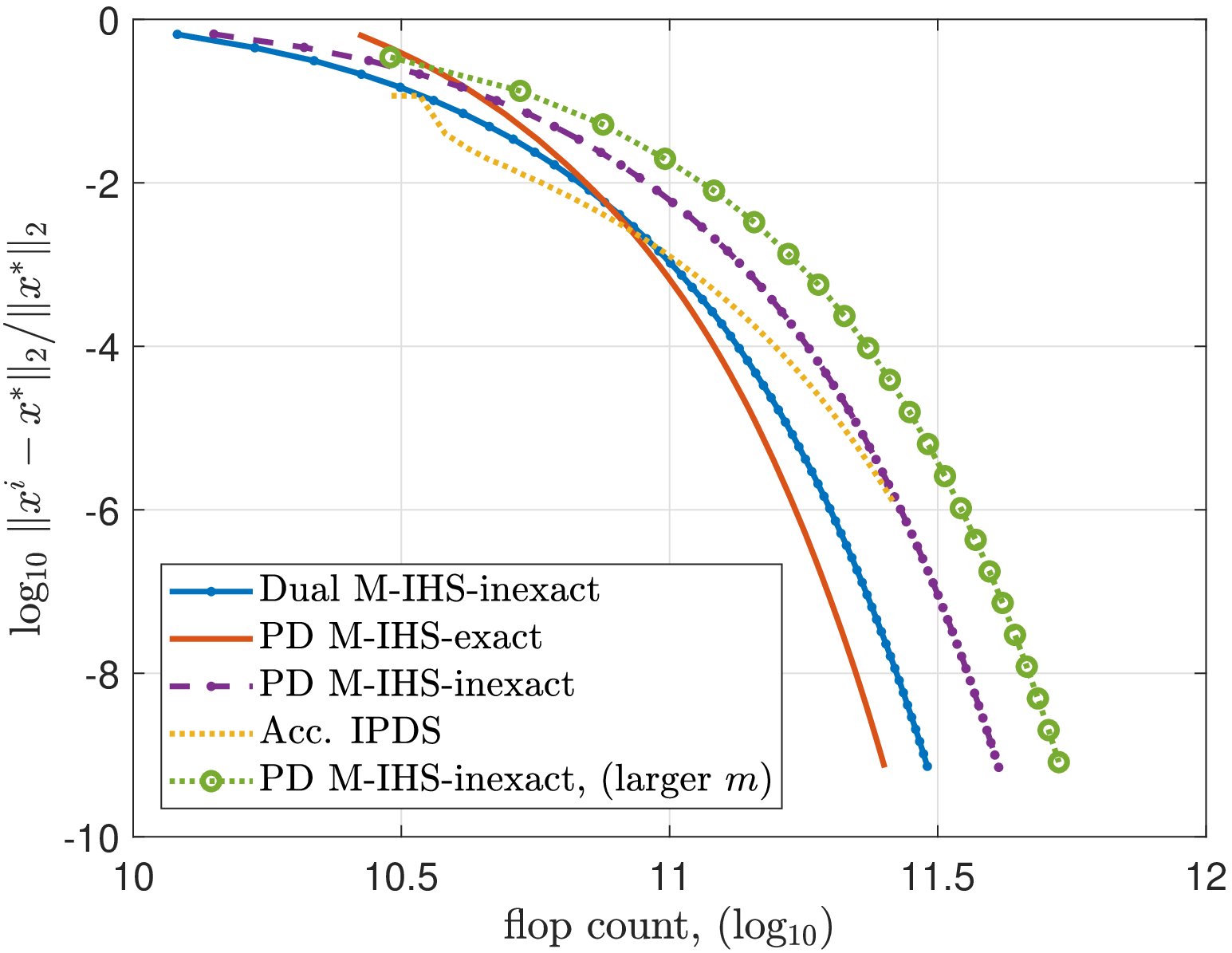}}
\caption{{Performance comparison on regularized LS problems with square-like dimensions. The problem dimensions are set to $\max(n,d) = 5\cdot10^4$ and $\min(n,d) = 10^4$ with a noise level of $10\%$. The results verifies two hypothesis: first the sketch size for the \ff{M-IHS} variants can be chosen proportional to the statistical dimension even if it becomes smaller than the size of the coefficient matrix. Second, the coefficient matrix can be sketched from both sides to reduce computational complexity.}}
\label{fig:square_reg}

\end{figure}%
Here, the \textit{inexact} schemes of the \ff{M-IHS} and \ff{Dual M-IHS} use a sketch size $m = 2\cdot\sd(A)$. The primal dual schemes use $m_1 = m_2 = 2\cdot\sd(A)$ except for the \ff{Primal Dual M-IHS} shown as a green curve which uses $m_1 = m_2 = 8\cdot\sd(A)$. All the methods are allowed to conduct $N = 60$ iterations except the \ff{Primal Dual M-IHS} with larger sketch size is allowed to conduct only $20$ iterations. The number of inner iterations are restricted by $M = 25$ for all the primal dual schemes. Lastly, a fixed forcing term $\epsilon_{sub} = 0.1$ is used in the \ff{AAb\_Solver} for all the inexact schemes. Applying a second dimension reduction may not seem to create significant computational saving, but this approach produces smaller sub-problems than the \ff{M-IHS} and the \ff{Dual M-IHS} techniques therefore enables estimation of parameters such as $\lambda$ with far fewer number of operations. Lastly, the \ff{Primal Dual M-IHS} variants have a noticeably higher rate of convergence than the A-IPDS algorithm which is based on the CG technique. 

\subsection{Scalability to larger size problems}\label{sec:varying}
In this section, as the size of the coefficient matrix and the sketch size increase we show that the saving gained by the inexact schemes become critically more important. For this purpose, the algorithms were run on the over-determined problems with size $5\cdot10^4\times\gamma\cdot500$ where $\gamma\in\{1,\ 2,\ 4,\ 8,\ 16\}$. The sketch size was chosen as $m=d=\gamma\cdot500$ and the regularization parameter was set to $0.1453$ for all the experiments so that $\sd(A) = d/10$ remains the same for all the experiments. The data was generated by using the setup described in Section \ref{sec:ex_setup}. Note that the convergence properties of the proposed techniques depend only on the statistical dimension but not directly to the decay rate of the singular values. To show this, for these experiments, we used \textit{heat} singular value profile that has significantly lower decay rate than the \textit{philips} profile used earlier. The experiments were realized on a desktop with 4Ghz i7-4790K CPU processor and 32Gb RAM. The flop count and wall clock time of the algorithms to reach to an $(\eta=10^{-4})$-optimal solution approximation are shown in Figure \ref{fig:tot_varying_d}. %
\begin{figure}[h]
\subfigure{\includegraphics[width=0.5\linewidth]{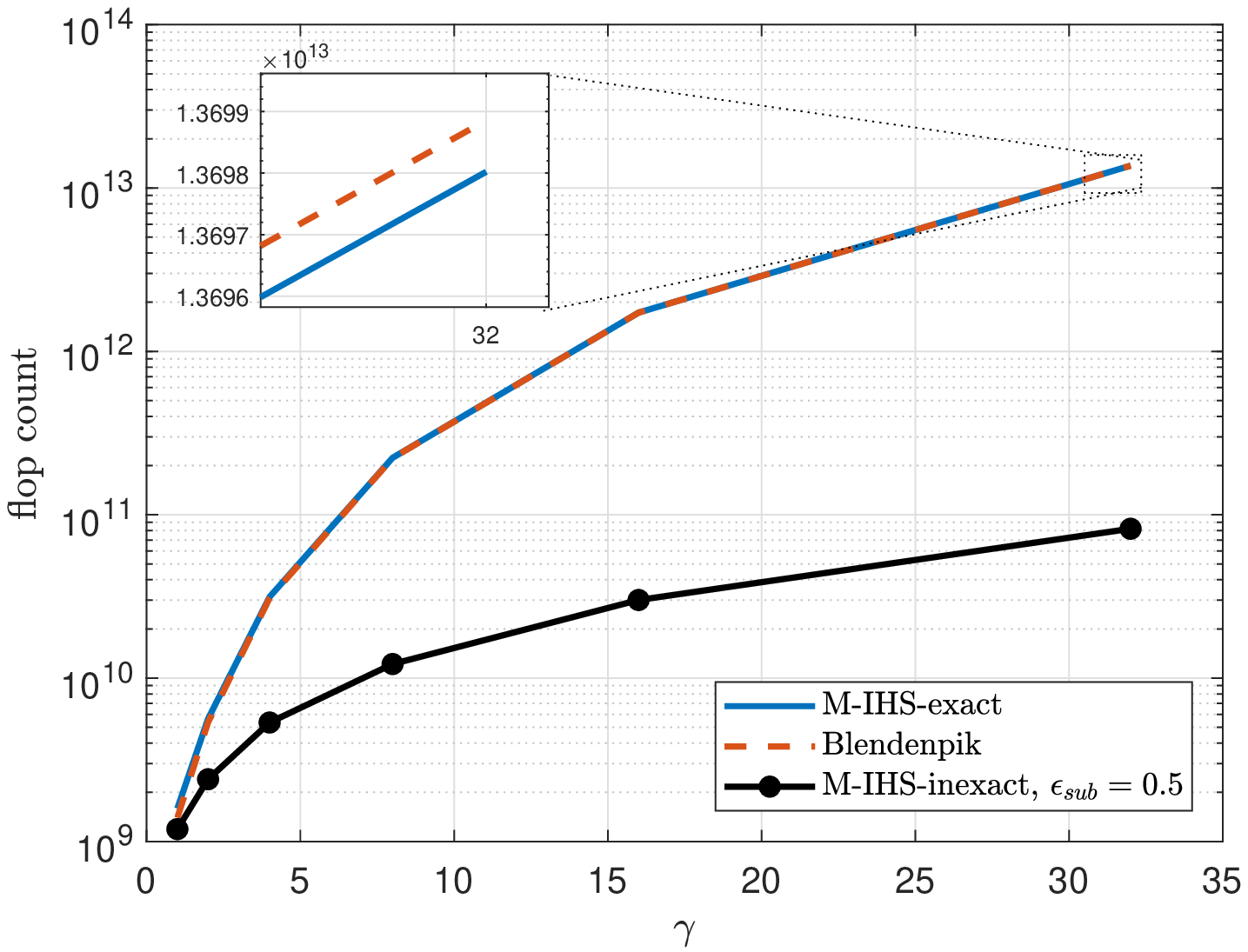}}%
\subfigure{\includegraphics[width=0.5\linewidth]{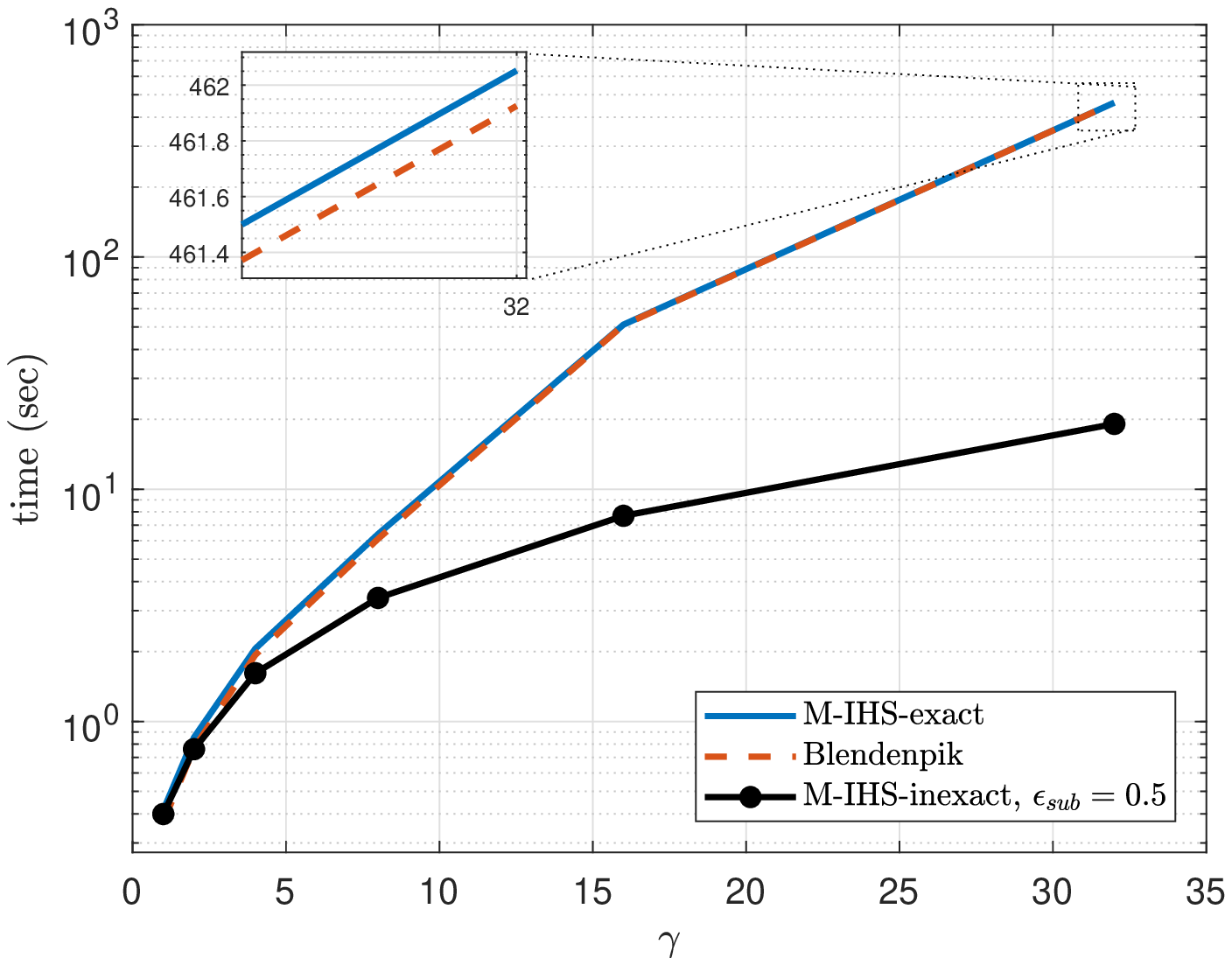}}%
\caption{{Complexity of the algorithms in terms of operation count and computation time on a set of $5\cdot10^4\times500\cdot\gamma$ dimensional over-determined problems with $m = d$ and $\sd(A) = d/10$.}}
  \label{fig:tot_varying_d}
   
\end{figure}%
\begin{figure}[h]
\subfigure{\includegraphics[width=0.5\linewidth]{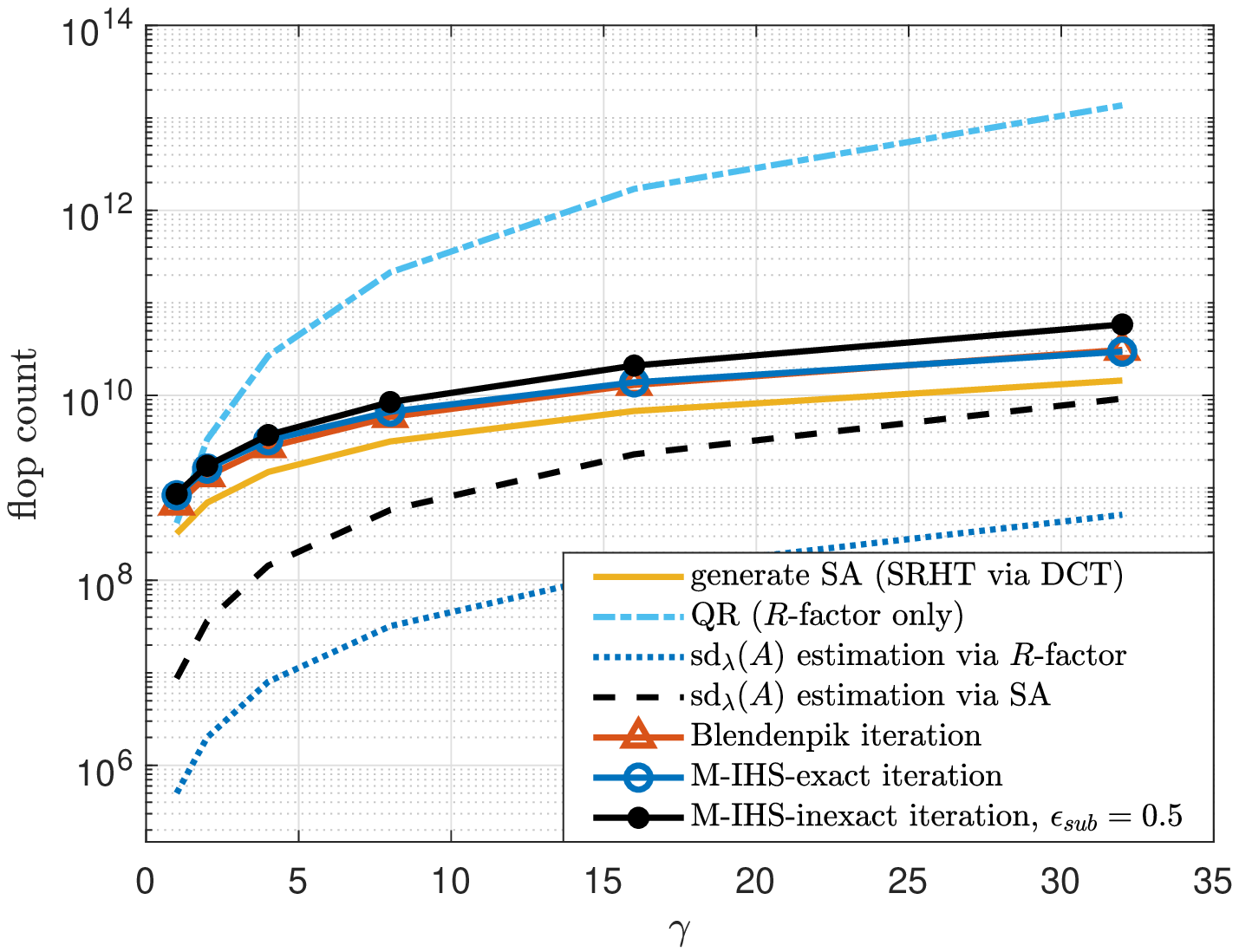}}%
\subfigure{\includegraphics[width=0.5\linewidth]{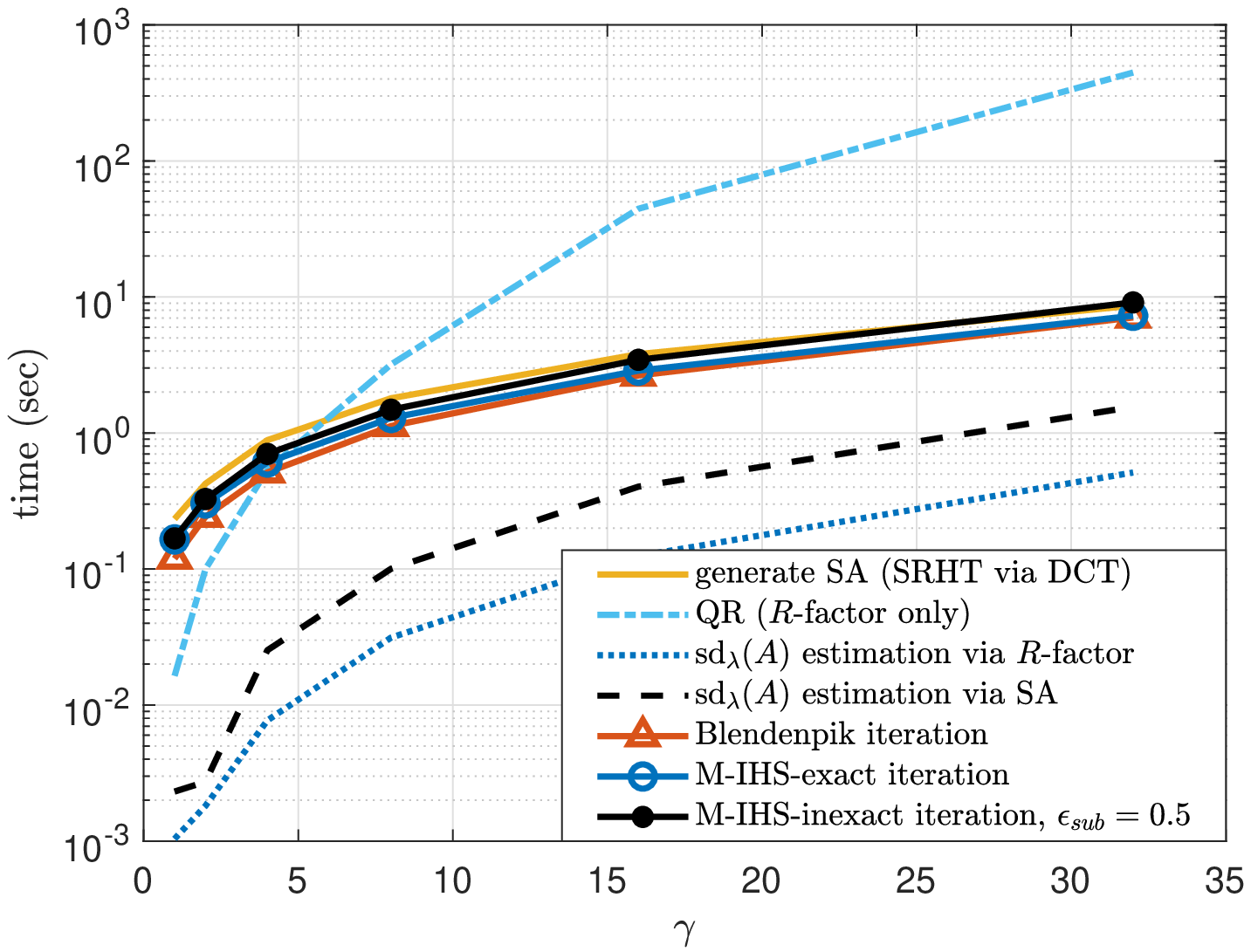}}
\caption{{Complexity of the each stage in terms of operation count and computation time on a set of $5\cdot10^4\times500\cdot\gamma$ dimensional over-determined problems with $m=d$ and $\sd(A) = d/10$. All methods contain SA generation stage. The Blendenpik and M-IHS-exact contain also QR decomposition stage but M-IHS-inexact does not. The M-IHS-exact estimates $\sd(A)$ by using the $R$-factor while the M-IHS-inexact uses directly $SA$ matrix and the \ff{AAb\_Solver} as proposed in Algorithm \ref{algo:trace}. The results show that the matrix decompositions are the main computational bottleneck for the exact schemes in large scale problems where the advantage of the inexact schemes becomes more significant.}}
\label{fig:detail_varying_d}

\end{figure}%
As $d$ and $m$ reach thousands, the number of operations required by the exact schemes (Blendenpik and \ff{M-IHS}) becomes larger than $100$ times of the operation count required by the inexact scheme. Moreover, the exact schemes need $25$ time longer time than the inexact scheme to reach the desired accuracy. Additionally, the operation counts and elapsed time in each stage of the algorithms can be seen in Figure \ref{fig:detail_varying_d} which shows that even the cost of the decomposition applied on the sketched matrices reaches to prohibitive levels for large scale problems. Hence the use of solvers such as \ff{M-IHS} variants that allow inexact schemes is the only practical choice in these regimes. In these experiments, for the estimation of the statistical dimension, we set $T = 2$ and $\epsilon_{tr} = 0.5$. The additional cost of the $\sd(A)$ estimation for the proposed \ff{M-IHS} variants becomes negligibly small when $R$-factor is utilized; for the inexact schemes, still it has a low cost, around the cost of one M-IHS-inexact iteration, that does not cause an issue unlike a matrix decomposition.
\begin{remark}
Bench-marking of the exact and inexact schemes by using wall clock time in MATLAB is not a fair comparison because for-loops in the interpreted languages such as MATLAB is well known to be much slower than the loops in compiled languages such as C. Most of the decompositions in MATLAB have C-based implementation with professional use of BLAS operations, while the inexact schemes are based on a for-loop. Therefore, we prefer to rely on the operation counts. However, to give an opinion, in spite of the disadvantages we demonstrate timing as well.
\end{remark}
\subsection{Effect of the statistical dimension on the performance of the inexact schemes}
The inexact schemes become more efficient as the statistical dimension decreases since the sub-problems are solved in less iterations. To show the effect of varying statistical dimension on the complexity of the algorithms, we used over-determined problems with size $5\cdot10^4\times16\cdot10^3$ and varied the regularization parameter to obtain different $\rho = \sd(A)/d$ ratios where $\rho \in \{0.5\%,\ 1\%, \ 2\%,\ 5\%,\ 10\%, \  20\%,\ 50\%\}$. The sketch size was chosen as $m=d$ and \textit{heat} profile was used. As in Section \ref{sec:varying}, the flop count and wall clock time of the algorithms to reach to an $(\eta=10^{-4})$-optimal solution approximation for the problems with different statistical dimensions are shown in Figure \ref{fig:tot_varying_sd}. %
\begin{figure}[h]
\subfigure{\includegraphics[width=0.5\linewidth]{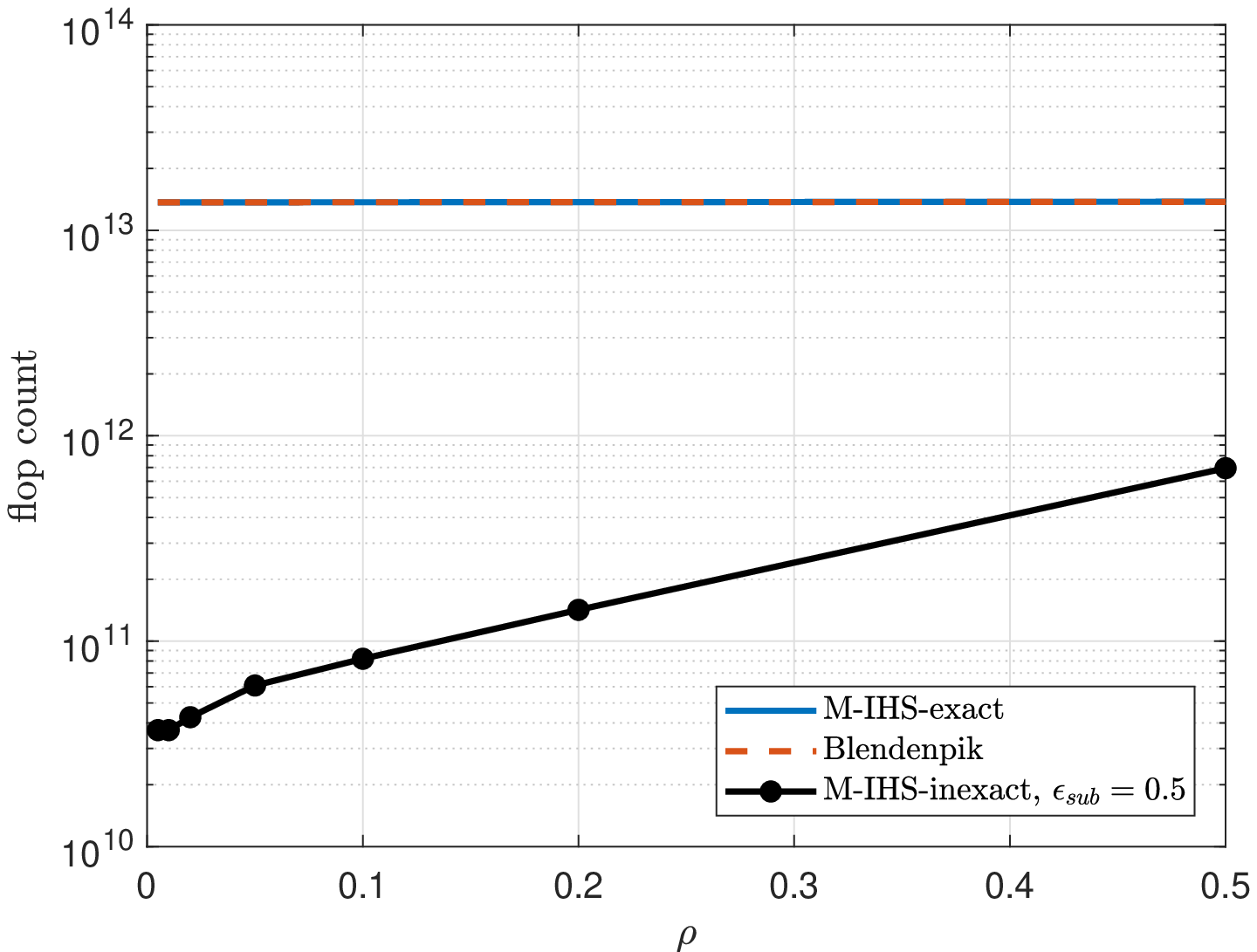}}%
\subfigure{\includegraphics[width=0.5\linewidth]{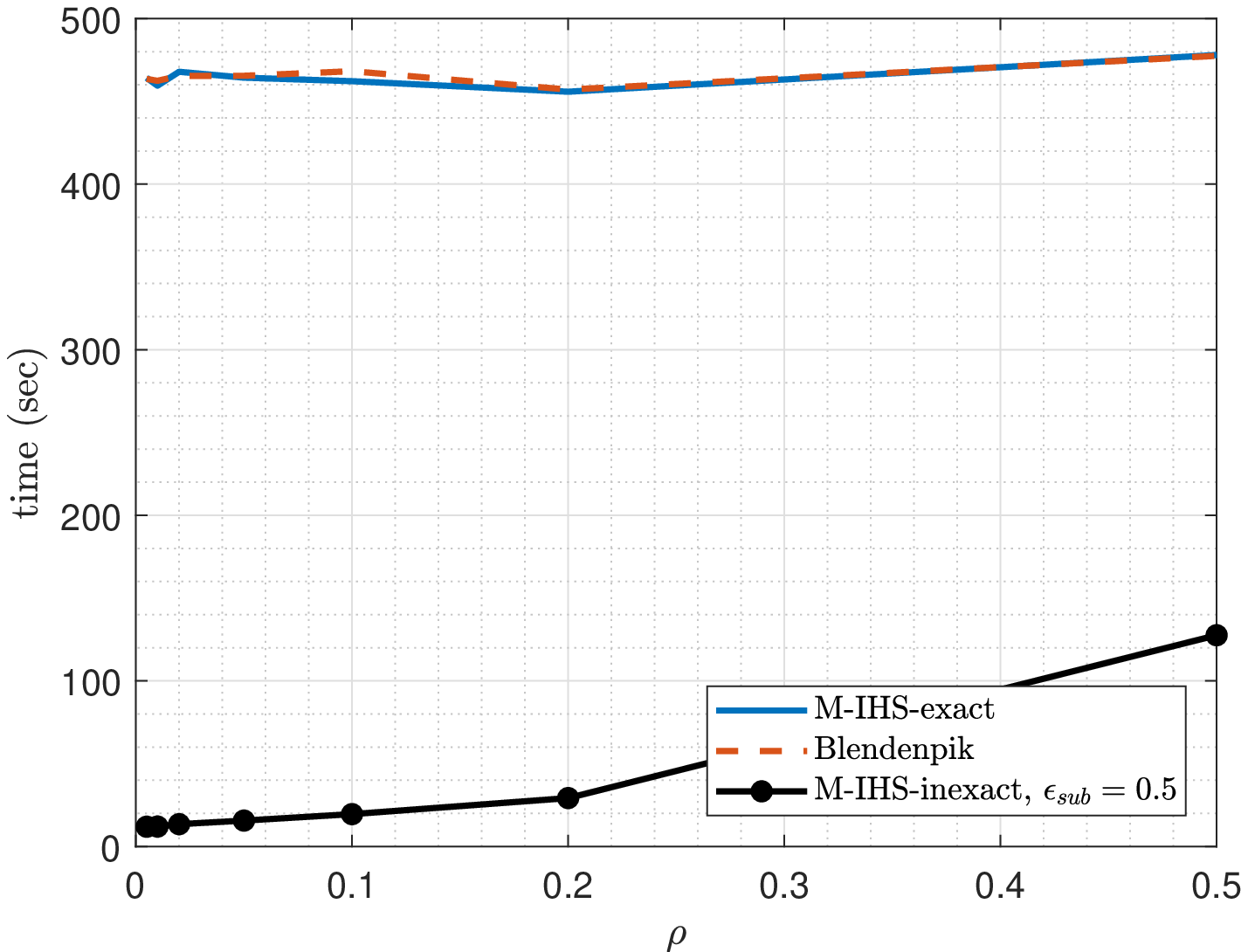}}%
\caption{{Complexity of the algorithms in terms of operation count and computation time on a $5\cdot10^4\times4\cdot10^3$ dimensional problem for different $\rho=\sd(A)/d$ ratios.}}
\label{fig:tot_varying_sd}
\end{figure}%
\begin{figure}[!htbp]
\subfigure{\includegraphics[width=0.5\linewidth]{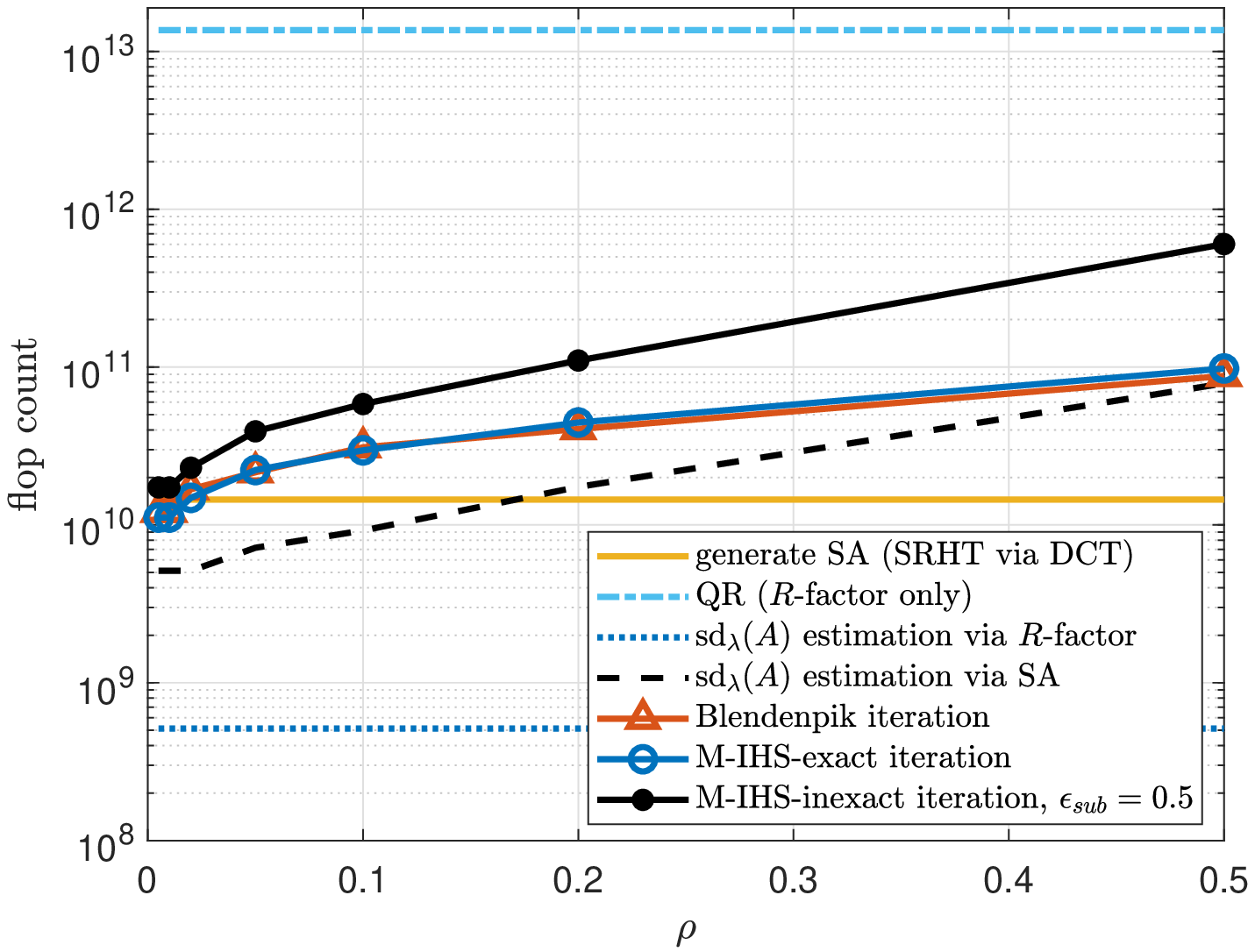}}%
\subfigure{\includegraphics[width=0.5\linewidth]{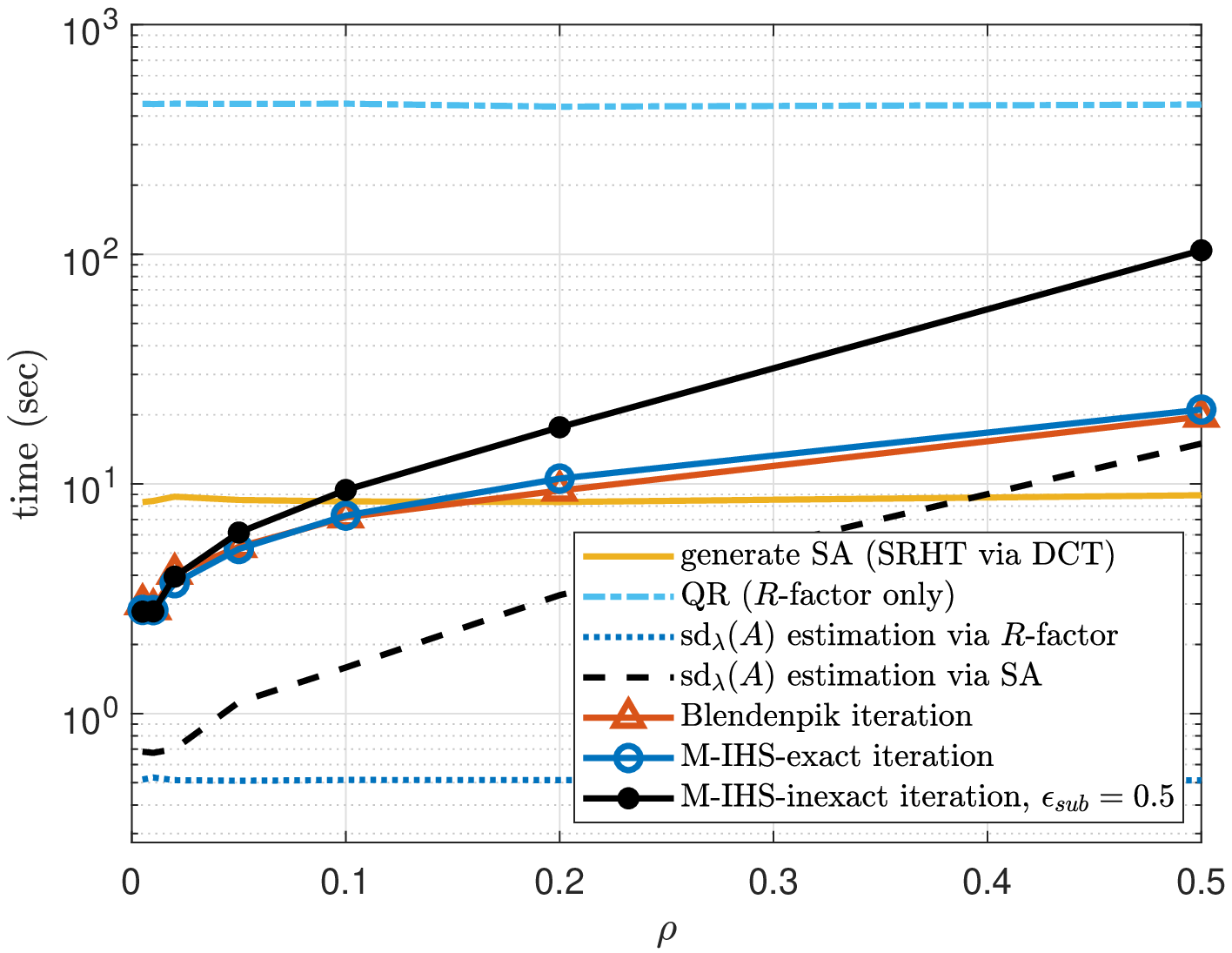}}%
\caption{{Complexity of each stage in terms of operation count and computation time on a $5\cdot10^4\times4\cdot10^3$ dimensional problem for different $\rho=\sd(A)/d$ ratios. Stages of each algorithm is given in Figure \ref{fig:detail_varying_d}.}}
\label{fig:detail_varying_sd}

\end{figure}%
Complexity of the exact schemes increases by the increasing statistical dimension since convergence rate $\sd(A)/m$ decreases as $m$ remains constant. Complexity of the inexact scheme increase faster since sub-problems require more iterations as the effective range space gets larger. The effect of the increasing statistical dimension over the different stages of the algorithms are shown in Figure \ref{fig:detail_varying_sd}.  For the estimation of $\sd(A)$, same parameters $T = 2$ and $\epsilon_{tr} = 0.5$ were used as early. Even for the large $\rho$ ratios, utilizing a sub-solver is still more effective than computing a matrix decomposition.

\section{Conclusions} \label{sec:conc}
Based on the IHS and the Heavy Ball Acceleration we proposed and analyzed the \ff{M-IHS} solvers for large scale LS problems. We examined the effect of $\ell2$ norm regularization on the optimal momentum parameters. We obtained lower bounds on the sketch size for several randomized distributions in order to get a pre-determined convergence rate with a constant probability. The bounds suggest that the sketch size can be chosen proportional to the statistical dimension of the problem. Hence, the \ff{M-IHS} variants can be used for any dimension regimes if the statistical dimension is sufficiently smaller than the dimensions of the coefficient matrix. We empirically showed the ratio between statistical dimension and the sketch size determines the convergence rate of the proposed \ff{M-IHS} variants. The main advantage of the proposed \ff{M-IHS} variants over the state of the art randomized preconditioning techniques such as the Blendenpik, A-IHS and LSRN is their ability to use inexact schemes that avoids matrix decompositions or inversions. As demonstrated in a wide range of numerical experiments, computational saving provided by the proposed solver becomes decidedly significant in large scale problems. Lastly, the proposed \ff{M-IHS} variants avoid using any inner products in their iterations and they are shown to be faster than CS-based randomized preconditioning algorithms. Therefore the proposed \ff{M-IHS} variants are strong candidates to be the techniques of choice in parallel or distributed architectures.

\bibliographystyle{unsrt}
\bibliography{files/references}
\appendix
\section{Discussion on the Proposed Error Upper Bound for Iterations of Primal Dual Algorithms in \cite{ref:acc_ihs}}\label{sec:app}
In this appendix, we provide details of a critical discussion on the steps of the derivation that leads to an error upper bound for the iterations of the primal dual algorithms given in \cite{ref:acc_ihs}. First, we provide a short list of minor issues that can easily be corrected.  
\begin{enumerate}
    \item During the initialization stage in \textit{Line 2} of both \textit{Algorithm 4} in page 4097 and \textit{Algorithm 5} in page 4098, the residual error vector $\mathbf{r}^{(0)}$ must be set to $-\lambda\mathbf{y}$ instead of $\mathbf{-y}$, otherwise iterates of the both of the algorithms diverge from the optimal solution. 
    \item During the initialization stage in \textit{Line 2} of \textit{Algorithm 7} in 4912, the dual residual error vector $\mathbf{r}_\text{Dual}^{(0)}$ must be set to $\mathbf{-\lambda\mathbf{y}}$ instead of $\mathbf{-y}$ and during the initialization stage of the inner loop iterations in \textit{Line 15}, the primal residual error vector $\mathbf{r}_\text{P}^{(0)}$ must be set to $-\mathbf{R}^T\mathbf{X}^T\mathbf{r}_\text{D}^{(t+1)}$; otherwise iterates of the algorithm diverges from the optimal solution. The MATLAB codes provided in the \href{https://github.com/ibrahimkurban/M-IHS}{link} includes these corrections.
\end{enumerate}
In addition to the above mentioned minor issues, there are some major issues as well. Unfortunately, we could not obtain corrective actions on these major issues as we could have done on the minor issues mentioned above. Therefore, a lower bound on the number of inner loop iterations, that guarantee a certain rate of convergence at the main loop, is still an open question for the primal dual algorithms. In the remaining of this appendix, we will provide steps of the derivation presented in \cite{ref:acc_ihs}, along with our critical remarks on their validity.

Consider the A-IHS updates $ \widehat{\mathbf{w}}^{t+1} = \widehat{\mathbf{w}}^t + \widehat{\mathbf{u}}^t.$ We are going to use exactly the same notation as \cite{ref:acc_ihs} except for that \textit{HS} subscript for the A-IHS iterates are omitted. In the primal dual algorithms, instead of exact sequence $\{\widehat{\mathbf{w}}^{t}\}$, a sequence $\{\widetilde{\mathbf{w}}^{t}\}$ is obtained due to the approximate minimizers that are used in place of $\widehat{\mathbf{u}}^t$. Consequently while the sequence $\{\widehat{\mathbf{w}}^{t}\}$ is obtained after $t$ exact iterations of the A-IHS algorithm, sequence $\{\widetilde{\mathbf{w}}^{t}\}$ is obtained after $t$ primal dual iterations in each of which $k$ inner loop updates are used to approximate $\widehat{\mathbf{u}}^t$'s. The details of the inner and outer loops can be found in Algorithm 7 of \cite{ref:acc_ihs}. The aim of the \textit{Theorem 9} is to establish an upper bound for $\lscost{\widetilde{\mathbf{w}}^{t+1} - \mathbf{w}^*}{\mathbf{X}}$ where $\mathbf{w}^*$ is the true minimizer of the primal objective function. The triangle inequality and the convergence rate of the A-IHS that is established in \textit{Theorem 2} of \cite{ref:acc_ihs} is used to find an upper bound for this error norm:
\begin{align}
    \lscost{\widetilde{\mathbf{w}}^{t+1} - \mathbf{w}^*}{\mathbf{X}} &\leq \lscost{\widehat{\mathbf{w}}^{t+1} - \mathbf{w}^*}{\mathbf{X}} + \lscost{\widetilde{\mathbf{w}}^{t+1} - \widehat{\mathbf{w}}^{t+1}}{\mathbf{X}}\leq \alpha^t\lscost{\mathbf{w}}{\mathbf{X}} + \lscost{\widetilde{\mathbf{w}}^{t+1} - \widehat{\mathbf{w}}^{t+1}}{\mathbf{X}},\nonumber
\end{align}
where $\alpha = \frac{C_0\sqrt{\mathbb{W}^2(\mathbf{X}\reals{p}\cap\mathcal{S}^{n-1}})\log(1/\delta)}{1-C_0\sqrt{\mathbb{W}^2(\mathbf{X}\reals{p}\cap\mathcal{S}^{n-1}})\log(1/\delta)}$. At this point a new iterate, $\widebar{\mathbf{w}}^{t+1}$, is introduced, which is the result of one exact step of the IHS initialized at $\widetilde{\mathbf{w}}^{t}$. The inner loop iterations at the $t$-th outer (main) loop iteration of the primal dual iterations are expected to converge $\widebar{\mathbf{w}}^{t+1}$. Therefore,
\begin{align}
    \lscost{\widetilde{\mathbf{w}}^{t+1} - \widehat{\mathbf{w}}^{t+1}}{\mathbf{X}} &\leq \lscost{\widetilde{\mathbf{w}}^{t+1} - \widebar{\mathbf{w}}^{t+1}}{\mathbf{X}} + \lscost{\widebar{\mathbf{w}}^{t+1} - \widehat{\mathbf{w}}^{t+1}}{\mathbf{X}},\nonumber\\
    \lscost{\widetilde{\mathbf{w}}^{t+1} - \widebar{\mathbf{w}}^{t+1}}{\mathbf{X}} &\leq \lambda_{max}\left(\frac{\mathbf{X}^T\mathbf{X}}{n}\right)\beta^k \lscost{\widebar{\mathbf{w}}^{t+1}}{2} \leq \lambda_{max}\left(\frac{\mathbf{X}^T\mathbf{X}}{n}\right)\beta^k \left(\lscost{\widebar{\mathbf{w}}^{t+1} - \mathbf{w}^*}{2} + \lscost{\mathbf{w}^*}{2}\right) \nonumber\\
    &\leq 2\lambda_{max}\left(\frac{\mathbf{X}^T\mathbf{X}}{n}\right)\beta^k  \lscost{\mathbf{w}^*}{2}, \label{wrong:inner}
\end{align}
where $\beta = \frac{C_0\sqrt{\mathbb{W}^2(\mathbf{X}^T\reals{p}\cap\mathcal{S}^{p-1}})\log(1/\delta)}{1-C_0\sqrt{\mathbb{W}^2(\mathbf{X}\reals{p}\cap\mathcal{S}^{p-1}})\log(1/\delta)}$. The last inequality is not valid unless $\lscost{\widebar{\mathbf{w}}^{t+1} - \mathbf{w}^*}{2} \leq \lscost{\mathbf{w}^*}{2}.$ However, particularly during the initial phases of the main iterations this condition can be violated. Therefore, this step of the proof requires a major revision. Assuming that such revision is possible, up to this point, the following is obtained:
\begin{equation}
    \lscost{\widetilde{\mathbf{w}}^{t+1} - \mathbf{w}^*}{\mathbf{X}} \leq \alpha^t\lscost{\mathbf{w}}{\mathbf{X}} + 2\lambda_{max}\left(\frac{\mathbf{X}^T\mathbf{X}}{n}\right)\beta^k  \lscost{\mathbf{w}^*}{2} + \lscost{\widebar{\mathbf{w}}^{t+1} - \widehat{\mathbf{w}}^{t+1}}{\mathbf{X}}.\label{eq:C}
\end{equation}
To proceed for the final form of the upper bound, the following upper bound on the last term of \eqref{eq:C} is given in \cite{ref:acc_ihs}:
\begin{equation*}
    \lscost{\widebar{\mathbf{w}}^{t+1} - \widehat{\mathbf{w}}^{t+1}}{\mathbf{X}} \leq \lscost{\widetilde{\mathbf{H}}^{-1}}{2}\lscost{\widetilde{\mathbf{H}} - \mathbf{H}}{2}\lscost{\widetilde{\mathbf{w}}^{t} - \widehat{\mathbf{w}}^{t}}{\mathbf{X}} \leq  \frac{4\lambda_{max}\left(\frac{\mathbf{X}^T\mathbf{X}}{n}\right)}{\lambda}\lscost{\widetilde{\mathbf{w}}^{t} - \widehat{\mathbf{w}}^{t}}{\mathbf{X}},
\end{equation*}
which is a valid bound. Then in \cite{ref:acc_ihs} the following upper bound is given without necessary justification:
\begin{equation*}
    \lscost{\widetilde{\mathbf{w}}^{t} - \widehat{\mathbf{w}}^{t}}{\mathbf{X}}\leq 2\lambda_{max}\left(\frac{\mathbf{X}^T\mathbf{X}}{n}\right)\beta^k  \lscost{\mathbf{w}^*}{2}.
\end{equation*}
to reach the final form of the error upper bound:
\begin{equation*}
    \lscost{\widetilde{\mathbf{w}}^{t+1} - \mathbf{w}^*}{\mathbf{X}} \leq \alpha^t\lscost{\mathbf{w}^*}{\mathbf{X}} + \frac{10\lambda_{max}^2\left(\frac{\mathbf{X}^T\mathbf{X}}{n}\right)}{\lambda}\beta^k  \lscost{\mathbf{w}^*}{2}.
\end{equation*}
However, this final form of the upper bound is not supported in detail as part of the presented proof. Because of the following major issue, we conclude that the proposed bound remains an unproven conjecture. The bound established for $\lscost{\widetilde{\mathbf{w}}^{t+1} - \widebar{\mathbf{w}}^{t+1}}{\mathbf{X}}$ in \eqref{wrong:inner} is used to upper bound $\lscost{\widetilde{\mathbf{w}}^{t} - \widehat{\mathbf{w}}^{t}}{\mathbf{X}}$. This is not justified as part of the proof in \cite{ref:acc_ihs}.

\end{document}